%% file: main.tex
\documentclass{article}
\usepackage{amsmath}
\usepackage{amsfonts}
\usepackage{latexsym}
\usepackage{mathtools}
\usepackage{amsthm}
\usepackage{amssymb}
\usepackage{tikz}
\numberwithin{equation}{section}
\newcommand{\cat}{\text{cat}}
\newcommand{\ccat}{\text{ccat}}
\newcommand{\Int}{\mathrm{Int}}
\newcommand{\R}{\mathbb{R}}
\newcommand{\Z}{\mathbb{Z}}

\newcommand{\grad}{\text{grad}}
\newcommand{\TC}{\text{TC}}
\newtheorem{theorem}{Theorem}[section]
\newtheorem*{theorem*}{Theorem}
\newtheorem{lemma}[theorem]{Lemma}
\newtheorem{definition}[theorem]{Definition}
\newtheorem{proposition}[theorem]{Proposition}
\newtheorem{corollary}[theorem]{Corollary}
\newtheorem{remark}[theorem]{Remark}
\newtheorem{example}[theorem]{Example}
\newtheorem*{acknowledgments*}{Acknowledgments}
\usepackage[hidelinks]{hyperref}
\usepackage{docmute}
\title{Topological complexity for closed 1-forms}
\author{Kenji Fukushi}

\date{}
\begin{document}
\maketitle

\begin{abstract}
\sloppy
Michael Farber introduced a generalization of the Lusternik--Schnirelmann
category for closed 1-forms.  In this paper, we introduce and
study a corresponding version of topological complexity for closed 1-forms.
We establish analogues of the basic properties of ordinary topological
complexity, including inequalities relating this invariant to the
Lusternik--Schnirelmann category for closed 1-forms.  We also prove a
closed 1-form analogue of the navigation-function method: upper bounds for our invariant can be obtained from the dynamical properties of
gradient flows of closed 1-forms.
\end{abstract}

\input{Intro.tex}

\input{definition.tex}

\input{cupproduct.tex}

\input{productineq_sub.tex}

\input{navigation_finalfantasy.tex}

\section*{Acknowledgments}
The author wishes to thank his supervisor Tsuyoshi Kato for constant encouragement and helpful suggestions.

Department of Mathematics, Kyoto University,
Kyoto, Japan

\textit{Email address}: \texttt{fukushi.kenji.85n@st.kyoto-u.ac.jp}

\end{document}

%% file: Intro.tex
\sloppy

\section{Introduction}

Topological complexity is a homotopy invariant arising
from the motion planning problem in robotics \cite{Far01}. 
It measures the minimal number of continuous local rules required to choose a path between any two points of a given space.

\begin{definition}[(Unreduced) topological complexity, {\cite{Far01}}]\label{def:TC}
Let \(PX\) be the space of continuous paths in a topological space \(X\), and define
\[
\pi \colon PX \to X \times X,\qquad
\pi(\gamma) = (\gamma(0),\gamma(1)).
\]
The topological complexity \(\TC(X)\) is the minimal integer \(k\) such that
\(X \times X\) admits an open covering
\[
X \times X = G_1 \cup \cdots \cup G_k
\]
for which each \(G_i\) admits a continuous section
\[
s_i \colon G_i \to PX
\]
of \(\pi\).
\end{definition}

The Lusternik--Schnirelmann category \(\cat(X)\) is also a homotopy invariant \cite{CLOT03}.
In the unreduced convention, it is defined as the minimal integer \(k\ge 1\)
such that \(X\) admits an open covering
\[
X=U_1\cup\cdots\cup U_k
\]
where each \(U_i\) is contractible in \(X\).
These invariants are closely related.
For example, one has the standard inequalities \cite{Far01}
\[
\cat(X) \le \TC(X) \le \cat(X\times X) \le 2\cat(X)-1.
\]

The Lusternik--Schnirelmann category associated with a closed 1-form was
introduced by Farber \cite{Far03,Far02}.
This invariant is motivated by Novikov's generalization of Morse theory to closed 1-forms \cite{Nov01, Nov02}. 
It reflects both topological and dynamical properties of closed 1-forms.

Closed 1-forms on cell complexes, as well as their line integrals along paths,
are understood in the sense of \cite[Section~10.2]{Far02}.
However, all explicit computations in this paper are carried out for ordinary smooth closed 1-forms on closed manifolds.

\begin{definition}[\(N\)-movable subset]
Let \(X\) be a finite cell complex, \(\xi\in H^1(X;\R)\), and let \(\omega\) be a closed 1-form representing \(\xi\). A subset \(U\subset X\) is called \(N\)-movable with respect to \(\omega\) if there exists a homotopy \[ H\colon U\times [0,1]\to X \] such that \(H(x,0)=x\) and \[ \int_{\gamma_x}\omega \le -N \] for every \(x\in U\), where \(\gamma_x(t)=H(x,t)\).
\end{definition}

\begin{definition}[Lusternik--Schnirelmann category for closed 1-forms,
{\cite[Definition~10.7]{Far02}}]
Let \(X\) be a finite cell complex, let \(\xi\in H^1(X;\mathbb R)\),
and let \(\omega\) be a closed 1-form representing \(\xi\).
The Lusternik--Schnirelmann category of \(X\) with respect to
\(\xi\), denoted by \(\cat(X,\xi)\), is the minimal integer \(k\)
such that, for every integer \(N>0\), there exists an open covering
\[
X = U \cup U_1 \cup \cdots \cup U_k
\]
where \(U\) is \(N\)-movable with respect to \(\omega\), and each \(U_i\)
is contractible in \(X\).
\end{definition}

Classically, the Lusternik--Schnirelmann category gives a lower bound for
the number of critical points of a smooth function on a closed manifold \cite{Tak01}.
One of the main features of the Lusternik--Schnirelmann category of closed
1-forms is its relation to the dynamics of gradient flows of closed 1-forms \cite{FS02}.  
We recall the following homoclinic-cycle theorem.

\begin{theorem}[{\cite[Theorem 10.15]{Far02}}]\label{thm:Farber-homoclinic-cycle}
Let \(M\) be a closed manifold, and let $\omega$ be a closed 1-form on $M$.
If \(\omega\) has fewer zeros than $\cat(M,[\omega])$,
then every gradient flow for \(\omega\) has a homoclinic cycle.
\end{theorem}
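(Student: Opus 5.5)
We argue by contraposition. Suppose some gradient flow \(\Phi_t\) for \(\omega\) has no homoclinic cycle, and that \(\omega\) has finitely many zeros \(p_1,\dots,p_m\) with \(m<\cat(M,[\omega])\); if the zero set is infinite the hypothesis fails, so this case is harmless. The goal is \(\cat(M,[\omega])\le m\). By definition, this means: for every \(N>0\) we must cover \(M=U\cup U_1\cup\cdots\cup U_m\) with \(U\) \(N\)-movable and each \(U_i\) contractible in \(M\). We take each \(U_i\) to be a small coordinate ball around \(p_i\), which is contractible, and try to show that the complement of these balls is \(N\)-movable by flowing along \(\Phi_t\).

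The key identity is that along a gradient trajectory \(\frac{d}{dt}\int\omega=\omega(v)<0\) away from zeros. Outside fixed neighborhoods \(V_i\) of the zeros, compactness gives \(\omega(v)\le-\epsilon\). Hence a point whose trajectory spends total time \(T\) outside \(\bigcup V_i\) has its line integral decreased by at least \(\epsilon T\).

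We therefore need two facts.

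\emph{(a)} There is a uniform bound on how often a trajectory can enter and leave the \(V_i\), or on how long it can stay there, except near points that converge to a zero.

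\emph{(b)} The set of points whose forward trajectories get stuck near zeros can be absorbed into small contractible neighborhoods of the zeros.

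For (b), use the flow-invariant sets \(W^s(p_i)\) and the standard Conley/Lyapunov argument. Without homoclinic cycles, the relation ``a trajectory runs from \(p_i\) to \(p_j\)'' generates no cycle \(p_{i_1}\to p_{i_2}\to\cdots\to p_{i_1}\), so it is a partial order. Following Farber's approach, this order yields a function \(\phi\) defined near the chain of connections that decreases along the flow. It also lets us choose the neighborhoods \(V_i\) so that a trajectory entering and leaving them passes through at most \(m\) of them, in a monotone order. Consequently, each trajectory either leaves all \(V_i\) after boundedly many visits and then flows freely, accumulating \(\int\omega\to-\infty\), or it converges to some \(p_i\).

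Now fix \(N\). Let \(U_i\) be an open set consisting of points whose forward orbits stay close to a piece of \(W^s(p_i)\) for a long time. We need to show such a set can be chosen contractible in \(M\). The plan is to deform it first along the flow toward a small ball around \(p_i\), and then contract that ball. This requires a uniform time bound for \(\Phi_t\) to carry a compact piece of \(W^s(p_i)\) into the ball; that bound follows by compactness plus continuity of the flow. We then let \(U=M\setminus\bigcup_i\overline{U_i'}\) for slightly smaller \(U_i'\). Since \(U\) is compact away from the stable sets, every point of \(U\) spends time at least \(N/\epsilon\) outside \(\bigcup V_i\) within some uniform time \(T_N\). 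The homotopy \(H(x,t)=\Phi_{tT_N}(x)\) then shows \(U\) is \(N\)-movable.

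The main obstacle is the interplay in (b). The stable manifolds \(W^s(p_i)\) are in general not closed: their closures contain broken trajectories through other zeros. So we must ensure that the union of the sets \(U_i\), each contractible, still captures every point whose flow time to gain \(-N\) is unbounded. This is exactly where the absence of homoclinic cycles enters. The acyclic order lets us argue by induction along the order of the zeros, handling minimal ones first and then enlarging neighborhoods. The risk is that a nonuniform time to escape near chains of connections, \(p_{i}\to p_j\to\cdots\), could make \(T_N\) infinite. Our first attempt would be to bound the number of passages via the partial order and use compactness of the space of broken trajectories, each of which has at most \(m\) breaks.
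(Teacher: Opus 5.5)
\textbf{There is a genuine gap.} Your outer frame is the right one: contraposition, one set per zero that is contractible in $M$, and a uniform flow time on the rest. (The paper gives no proof of this statement; it cites \cite[Theorem~10.15]{Far02}. The Latschev-type decomposition it borrows later in Lemma~\ref{lem:no-homoclinic-truncated-stable} is essentially the step you are missing.) But your proposal stops exactly where the content lies: you name the ``main obstacle'' and leave it at a ``first attempt''.

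Several intermediate claims are also not what the hypothesis gives. Choosing the $V_i$ so that every trajectory meets at most $m$ of them in monotone order does not follow from the absence of homoclinic cycles. Neither does the conclusion that non-convergent trajectories leave $\bigcup V_i$ after boundedly many visits. That hypothesis says nothing about orbits of unbounded energy, and such orbits can keep returning near the same zero. What is true is only the energy dichotomy: either $\int_0^t\omega(v)\to-\infty$ or the orbit converges to a zero, and this needs no hypothesis at all. Likewise, your ``compact piece of $W^s(p_i)$'' is not compact, because its closure meets other stable sets along broken orbits. So no uniform time bound is available there.

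You have also put the hypothesis in the wrong place. Write $E(x,t)=-\int_0^t\omega(v)(\Phi_s x)\,ds$. Let $U_i$ be the set of $x$ for which some \emph{admissible} time exists, meaning a $t\ge0$ with $\Phi_t(x)\in\Int D_r(p_i)$ and $E(x,t)<N$, where $D_r(p_i)$ is a small coordinate ball. Then $K=M\setminus\bigcup_iU_i$ is compact, and by the dichotomy $E(x,\infty)>N$ on $K$. Compactness gives a uniform $T_N$, and the open set $\{x:E(x,T_N)>N\}\supset K$ is $N$-movable by the flow. No ordering of the zeros is needed, and chains $p_i\to p_j\to\cdots$ cannot make $T_N$ infinite.

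What actually has to be proved is that each $U_i$ is contractible in $M$. That requires a \emph{continuous} $T_i$ with $\Phi_{T_i(x)}(x)$ near $p_i$. First-entry times are not continuous, and averaging admissible times with a partition of unity works only if they lie in an interval of good times. Homoclinic orbits destroy exactly this. If $\gamma$ is a homoclinic orbit at $p_i$ of energy $<N$, then $U_i$ contains the loop $\gamma\cup\{p_i\}$, which has nonzero $\omega$-period, so $U_i$ is not contractible in $M$.

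The missing lemma is the following. Fix $N$ and a ball $D_R(p_i)$ containing no other zero. Then there is $r>0$ such that no orbit segment of energy $\le N$ starts in $D_r(p_i)$, leaves $D_R(p_i)$, and returns to $D_r(p_i)$. If this failed, letting $r\to0$ and using compactness of energy-bounded orbit segments would produce a chain of orbits $p_i\to q_1\to\cdots\to p_i$, which is a homoclinic cycle. With this lemma, all admissible times of $x\in U_i$ lie in one component of $\{t:\Phi_t(x)\in\Int D_R(p_i)\}$. A partition of unity then yields a continuous $T_i$, and $U_i$ contracts in $M$ through $D_R(p_i)$.
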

A homoclinic cycle is a cyclic sequence of flow lines joining zeros of
\(\omega\), where each trajectory connects one zero to the next and the
last trajectory returns to the first zero.
Thus, \(\cat(M,\xi)\) detects dynamical phenomena that do not occur in the ordinary gradient-flow theory of functions.

In this paper, for a finite cell complex \(X\) and a cohomology class
\(\xi\in H^1(X;\R)\), we introduce an analogue of topological complexity
associated with \(\xi\), denoted \(\TC(X,\xi)\). 
Roughly speaking, \(\TC(X,\xi)\) measures the motion-planning problem for
pairs \((x,y)\in X\times X\) whose target point \(y\) cannot be moved by a
homotopy with arbitrarily large negative \(\omega\)-integral.

For example, such a homotopy may arise from the flow \(\varphi_t\) of a
vector field.  If, along the trajectory of \(y\),
\[
\int_0^T \omega\bigl(\dot\varphi_t(y)\bigr)\,dt
\]
tends to \(-\infty\) as \(T\to\infty\), then \(y\) is movable
with respect to \(\omega\).  The invariant \(\TC(X,\xi)\) ignores this
movable part and measures the remaining motion-planning complexity.

We establish analogues of the standard inequalities relating ordinary
topological complexity to Lusternik--Schnirelmann category, as well as
cohomological lower bounds expressed in terms of cup products with local
coefficients.

A new feature of this invariant is that upper bounds for \(\TC(X,\xi)\)
may be derived from the dynamical properties of gradient flows of
closed 1-forms.  In the classical setting, upper bounds for \(\TC(X)\)
can be obtained from so-called navigation functions \cite[Section~4.4]{Far04}, namely suitable
Morse--Bott functions whose gradient flows organize local motion planners.
We prove an analogue of this method in which the navigation function is
replaced by a closed 1-form.  Thus \(\TC(X,\xi)\) connects motion planning, Lusternik--Schnirelmann category, and the dynamics of
closed 1-forms.

%% file: definition.tex
\sloppy

\section{Definition of topological complexity of closed 1-forms and some properties}

\subsection{Definition}

Throughout this paper, all spaces are assumed to be finite cell complexes.
Let \(X\) be a finite cell complex, and let
\[
\pi\colon PX\to X\times X,\qquad
\pi(\gamma)=(\gamma(0),\gamma(1))
\]
be the endpoint fibration. We use the unreduced convention for topological
complexity.

Let \(\omega\) be a closed 1-form on \(X\), and let
\[
p_1,p_2\colon X\times X\to X
\]
be the two projections. Choose a closed 1-form \(\Omega\) on \(X\times X\)
such that
\[
[\Omega]
=
p_2^*[\omega]-p_1^*[\omega]
\in H^1(X\times X;\R).
\]

\begin{definition}[Topological complexity for a closed 1-form]
\label{def:TComega}
The topological complexity associated with \(\omega\), denoted by
\(\TC(X,[\omega])\), is the smallest integer \(k\) such that, for every
integer \(N>0\), there exists an open covering
\[
X\times X
=
F^{(N)}\cup F_1^{(N)}\cup\cdots\cup F_k^{(N)}
\]
satisfying the following conditions:
\begin{itemize}
\item[(1)] For each \(i=1,\dots,k\), there exists a continuous map
\[
s_i^{(N)}\colon F_i^{(N)}\to PX
\]
such that
\[
\pi\circ s_i^{(N)}
=
\mathrm{id}_{F_i^{(N)}}.
\]

\item[(2)] There exist a constant \(C>0\), independent of \(N\), and a
homotopy
\[
H_N\colon F^{(N)}\times[0,1]\to X\times X
\]
such that
\[
H_N(z,0)=z
\]
for every \(z\in F^{(N)}\). Writing
\[
\gamma_N^z(t)=H_N(z,t),
\]
one has
\[
\int_{\gamma_N^z|_{[0,t]}}\Omega\le C
\]
for every \(z\in F^{(N)}\) and every \(t\in[0,1]\), and
\[
\int_{\gamma_N^z}\Omega\le -N
\]
for every \(z\in F^{(N)}\).
\end{itemize}
\end{definition}

\begin{remark}
The set \(F^{(N)}\) consists of points of \(X\times X\) which can be
moved arbitrarily far in the negative direction of the closed 1-form
\(\Omega\). Such points are not counted among the local motion-planning
domains in the definition of \(\TC(X,[\omega])\).
\end{remark}

\begin{remark}
One may take
\[
\Omega=p_2^*\omega-p_1^*\omega.
\]
In this case, writing
\[
H_N((x,y),t)
=
\bigl(\alpha_N^{x,y}(t),\beta_N^{x,y}(t)\bigr),
\]
one has
\[
\int_{\gamma_N^{(x,y)}|_{[0,t]}}\Omega
=
\int_{\beta_N^{x,y}|_{[0,t]}}\omega
-
\int_{\alpha_N^{x,y}|_{[0,t]}}\omega.
\]
Thus Definition~\ref{def:TComega} agrees with the movable condition written
in terms of the two coordinates.
\end{remark}

\begin{proposition}[Well-definedness]
\label{prop:representative-independence}
The invariant \(\TC(X,[\omega])\) is independent of the choice of the
closed 1-form \(\Omega\) satisfying
\[
[\Omega]
=
p_2^*[\omega]-p_1^*[\omega].
\]
Moreover, it depends only on the cohomology class \([\omega]\).
\end{proposition}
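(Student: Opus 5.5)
The plan is to show directly that if \(\Omega\) and \(\Omega'\) are two closed 1-forms on \(X\times X\) in the same cohomology class, then any family of coverings witnessing \(\TC(X,[\omega])\le k\) for \(\Omega\) also witnesses it for \(\Omega'\), after adjusting the constant \(C\) and the index \(N\). The second assertion, that the invariant depends only on \([\omega]\), follows at once. Indeed, if \(\omega'\) is another representative of \([\omega]\), then both \(p_2^*\omega-p_1^*\omega\) and \(p_2^*\omega'-p_1^*\omega'\) represent the same class \(p_2^*[\omega]-p_1^*[\omega]\). Moreover, Definition~\ref{def:TComega} refers to \(\omega\) only through the class of \(\Omega\).

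The key step uses that \(\Omega'-\Omega\) is exact. Since \(X\times X\) is a finite cell complex, in the sense of \cite[Section~10.2]{Far02} we can write \(\Omega'=\Omega+df\) with \(f\colon X\times X\to\R\) continuous. Compactness of \(X\times X\) gives a bound \(|f|\le A\). For any path \(\gamma\) in \(X\times X\) we then have
\[
\int_{\gamma}\Omega'=\int_{\gamma}\Omega+f(\gamma(1))-f(\gamma(0)),
\]
so the two integrals differ by at most \(2A\), and this holds uniformly over all paths.

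Now suppose that for every \(N\) we have coverings \(F^{(N)}\cup F_1^{(N)}\cup\cdots\cup F_k^{(N)}\), sections \(s_i^{(N)}\), homotopies \(H_N\) and a constant \(C\) satisfying the conditions for \(\Omega\). Given \(N>0\), take the data for the integer \(N'=N+\lceil 2A\rceil\) and keep the same open sets, sections and homotopy. Condition (1) does not involve \(\Omega\), so it still holds. For condition (2), every partial path \(\gamma^z_{N'}|_{[0,t]}\) satisfies
\[
\int_{\gamma^z_{N'}|_{[0,t]}}\Omega'\le C+2A,
\]
and the full path satisfies
\[
\int_{\gamma^z_{N'}}\Omega'\le -N'+2A\le -N.
\]
So the new constant \(C'=C+2A\) is still independent of \(N\). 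Hence the minimal \(k\) for \(\Omega'\) is at most the minimal \(k\) for \(\Omega\), and by symmetry the two are equal. It is worth noting that the uniform bound \(C\) in condition (2) is exactly what survives this argument, because a bounded perturbation changes it only by a constant.

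The main point requiring care is the existence of the bounded primitive \(f\) in the cell-complex setting. Closed 1-forms there are defined as in \cite{Far02} via families of local continuous functions, with exactness meaning a global continuous primitive, and path integrals are defined through these local primitives. Vanishing of the cohomology class of \(\Omega'-\Omega\) gives a continuous global primitive, since the periods over loops vanish. Compactness then gives the bound. For smooth forms on closed manifolds this is simply de Rham's theorem.
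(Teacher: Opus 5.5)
Your proposal is correct and follows the paper's proof essentially step for step. You write \(\Omega'=\Omega+df\) with \(f\) bounded by compactness, keep the same cover, sections and homotopies, replace \(C\) by \(C+2K\) and shift \(N\) by \(2K\), conclude by symmetry, and observe that \(\omega\) and \(\omega'\) admit the same choice of \(\Omega\).
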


\begin{proof}
Let \(\Omega'\) be another closed 1-form on \(X\times X\) satisfying
\[
[\Omega']
=
p_2^*[\omega]-p_1^*[\omega].
\]
Then
\[
\Omega'=\Omega+df
\]
for some continuous function
\[
f\colon X\times X\to\R.
\]
Since \(X\times X\) is compact, there exists \(K>0\) such that
\[
|f|\le K.
\]

Suppose that
\[
X\times X
=
F\cup F_1\cup\cdots\cup F_k
\]
satisfies the defining conditions with respect to \(\Omega\). For every
\(N>0\), let
\[
H_N\colon F\times[0,1]\to X\times X
\]
be the corresponding homotopy, and write
\[
\gamma_N^z(t)=H_N(z,t).
\]
For every \(t\in[0,1]\), one has
\[
\int_{\gamma_N^z|_{[0,t]}}df
=
f(\gamma_N^z(t))-f(z),
\]
and hence
\[
\left|
\int_{\gamma_N^z|_{[0,t]}}df
\right|
\le 2K.
\]
It follows that
\[
\int_{\gamma_N^z|_{[0,t]}}\Omega'
\le C+2K.
\]
At the final time,
\[
\int_{\gamma_N^z}\Omega'
\le -N+2K.
\]
Given \(N'>0\), choose an integer \(N\) such that
\[
N\ge N'+2K.
\]
Then
\[
\int_{\gamma_N^z}\Omega'\le -N'.
\]
Thus the same open covering satisfies the defining conditions with
respect to \(\Omega'\), after replacing \(C\) by \(C+2K\). Therefore the
number obtained using \(\Omega'\) is no larger than the number obtained
using \(\Omega\). By symmetry, the two numbers are equal.

Now let \(\omega'\) be another closed 1-form on \(X\) such that
\[
[\omega']=[\omega].
\]
Then
\[
p_2^*[\omega']-p_1^*[\omega']
=
p_2^*[\omega]-p_1^*[\omega].
\]
Hence the same closed 1-form \(\Omega\) may be used in the definition for
both \(\omega\) and \(\omega'\). Therefore
\[
\TC(X,[\omega'])=\TC(X,[\omega]).
\]
\end{proof}

For
\[
\xi\in H^1(X;\R),
\]
choose any closed 1-form \(\omega\) representing \(\xi\), and define
\[
\TC(X,\xi):=\TC(X,[\omega]).
\]
By Proposition~\ref{prop:representative-independence}, this definition is
independent of the choice of \(\omega\).

If \([\omega]=0\), then
\[
[\Omega]=0.
\]
Hence
\[
\Omega=df
\]
for some continuous function
\[
f\colon X\times X\to\R.
\]
Since \(X\times X\) is compact, the integral of \(\Omega\) along any path
is uniformly bounded. Therefore the movable condition cannot hold for
arbitrarily large \(N\) unless the movable set is empty. Consequently,
\[
\TC(X,[\omega])=\TC(X).
\]
Equivalently,
\[
\TC(X,0)=\TC(X).
\]

\subsection{Homotopy invariance}

\begin{proposition}
Let \(X\) and \(Y\) be finite connected cell complexes, and let
\[
\varphi\colon X\to Y
\]
be a homotopy equivalence. Let \(\omega\) be a closed 1-form on \(Y\).
Then
\[
\TC(X,[\varphi^*\omega])=\TC(Y,[\omega]).
\]
Equivalently, for every
\[
\xi\in H^1(Y;\R),
\]
one has
\[
\TC(X,\varphi^*\xi)=\TC(Y,\xi).
\]
\end{proposition}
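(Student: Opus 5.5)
The plan is to prove the inequality \(\TC(X,\varphi^*\xi)\le \TC(Y,\xi)\) for an arbitrary map \(\varphi\) admitting a left homotopy inverse, and then apply it in both directions. Let \(\psi\colon Y\to X\) be a homotopy inverse, with homotopies \(\psi\circ\varphi\simeq \mathrm{id}_X\) via \(h\colon X\times[0,1]\to X\) and \(\varphi\circ\psi\simeq\mathrm{id}_Y\). Fix the closed form \(\omega\) on \(Y\) and use \(\varphi^*\omega\) on \(X\). By Proposition~\ref{prop:representative-independence} we may take the explicit forms
\[
\Omega_Y=p_2^*\omega-p_1^*\omega,\qquad \Omega_X=p_2^*\varphi^*\omega-p_1^*\varphi^*\omega=(\varphi\times\varphi)^*\Omega_Y .
\]

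Suppose \(Y\times Y=F^{(N)}\cup F_1^{(N)}\cup\cdots\cup F_k^{(N)}\) is a covering as in Definition~\ref{def:TComega}. Pull it back along \(\varphi\times\varphi\) to get \(G^{(N)}=(\varphi\times\varphi)^{-1}F^{(N)}\) and \(G_i^{(N)}=(\varphi\times\varphi)^{-1}F_i^{(N)}\).

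\textbf{Motion planners.} For \((x,y)\in G_i^{(N)}\), define a path from \(x\) to \(y\) by concatenating three pieces: the path \(h(x,\cdot)\) reversed, running from \(x\) to \(\psi\varphi(x)\); the path \(\psi\circ s_i^{(N)}(\varphi x,\varphi y)\); and \(h(y,\cdot)\), running from \(\psi\varphi(y)\) to \(y\). This is continuous in \((x,y)\) and is the standard argument for homotopy invariance of \(\TC\).

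\textbf{Movable part.} For \(z\in G^{(N)}\) we need a homotopy in \(X\times X\) starting at \(z\). I take the concatenation of three pieces:
\begin{itemize}
\item[(a)] the reverse of \((h\times h)(z,\cdot)\), from \(z\) to \((\psi\varphi\times\psi\varphi)(z)\);
\item[(b)] \((\psi\times\psi)\circ H_N\bigl((\varphi\times\varphi)(z),\cdot\bigr)\).
\end{itemize}
The \(\Omega_X\)-integrals along these pieces are estimated as follows.

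For piece (a), the path \(h(x,\cdot)\) has \(\varphi^*\omega\)-integral \(g(x)\), which is continuous in \(x\) on compact \(X\). Indeed, for a homotopy, integrals along the track paths define a continuous function; concretely, \(h^*\varphi^*\omega-\mathrm{pr}^*\varphi^*\omega\) is exact on \(X\times[0,1]\) up to the standard primitive. So partial integrals along piece (a) are uniformly bounded by a constant \(K_1\).

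For piece (b), the integral of \(\Omega_X\) along \((\psi\times\psi)\circ\gamma\) equals the integral of \((\psi\times\psi)^*(\varphi\times\varphi)^*\Omega_Y\) along \(\gamma\). Since \(\varphi\psi\simeq\mathrm{id}_Y\), this form is cohomologous to \(\Omega_Y\), so it equals \(\Omega_Y+df\) with \(f\) bounded on compact \(Y\times Y\). The estimate from the proof of Proposition~\ref{prop:representative-independence} then bounds partial integrals by \(C+2K\) and the total integral by \(-N+2K\).

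Altogether the partial integrals are at most \(C+2K+K_1\) and the total is at most \(-N+2K+K_1\), where these constants are independent of \(N\). Reindexing \(N\) as in Proposition~\ref{prop:representative-independence} gives \(\TC(X,\varphi^*\xi)\le\TC(Y,\xi)\).

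\textbf{Reverse inequality.} Apply the same argument to \(\psi\) with the class \(\varphi^*\xi\). This gives \(\TC(Y,\psi^*\varphi^*\xi)\le\TC(X,\varphi^*\xi)\), and \(\psi^*\varphi^*\xi=\xi\).

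The main obstacle I expect is the uniform control of the integrals along the connecting homotopies (a) and along the pushforward in (b). Both rest on the fact that for homotopic maps \(f_0\simeq f_1\), the form \(f_1^*\eta-f_0^*\eta\) is \(dg\) with \(g\) the track integral. Continuity and boundedness of \(g\) must be verified in the cell-complex setting of \cite[Section~10.2]{Far02}. I would establish this as a short lemma first, using a primitive of the pulled-back form on the simply connected fibres \(X\times[0,1]\to X\), and then feed it into the estimates above.
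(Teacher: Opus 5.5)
Your proposal is correct and is essentially the paper's proof: you pull the cover back along $\varphi\times\varphi$, transfer planners through the homotopy $\mathrm{id}_X\simeq\psi\varphi$, and make the pulled-back movable set movable by following that homotopy with $(\psi\times\psi)\circ H_N\circ(\varphi\times\varphi)$. The bounded track integrals and the bounded exact error coming from $(\varphi\psi\times\varphi\psi)^*\Omega_Y-\Omega_Y$ are then absorbed into a shift of $N$, exactly as the paper does.

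Two small corrections, neither affecting the argument. First, step (b) genuinely uses $\varphi\psi\simeq\mathrm{id}_Y$, and your opening claim for maps with only a left homotopy inverse is false: take $X$ a point, $Y=S^1$, $\xi=[d\theta]$, which would give $1\le 0$. Drop that phrasing. Second, your movable homotopy has two pieces, not three.
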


\begin{proof}
Choose a closed 1-form \(\Omega\) on \(Y\times Y\) such that
\[
[\Omega]
=
p_2^*[\omega]-p_1^*[\omega].
\]
Then
\[
\Omega_X=(\varphi\times\varphi)^*\Omega
\]
satisfies
\[
[\Omega_X]
=
p_2^*[\varphi^*\omega]-p_1^*[\varphi^*\omega]
\]
on \(X\times X\).

Let
\[
\psi\colon Y\to X
\]
be a homotopy inverse of \(\varphi\). We prove
\[
\TC(X,[\varphi^*\omega])\le \TC(Y,[\omega]).
\]
The reverse inequality follows by interchanging \(X\) and \(Y\).

Let
\[
Y\times Y
=
F\cup F_1\cup\cdots\cup F_k
\]
be an open covering satisfying the defining conditions for
\(\TC(Y,[\omega])\), using the representative \(\Omega\). Put
\[
F'=(\varphi\times\varphi)^{-1}(F),
\qquad
F_i'=(\varphi\times\varphi)^{-1}(F_i).
\]
Then
\[
X\times X
=
F'\cup F_1'\cup\cdots\cup F_k'.
\]

The local motion planners on the sets \(F_i'\) are obtained in the usual
way. Namely, one concatenates a fixed homotopy from
\(\mathrm{id}_X\) to \(\psi\varphi\), the path obtained by applying
\(\psi\) to the local motion planner on \(F_i\), and the reverse of the
same fixed homotopy at the second endpoint.

It remains to verify the movable condition on \(F'\). Let
\[
H_N\colon F\times[0,1]\to Y\times Y
\]
be the movable homotopy with respect to \(\Omega\). Using a fixed
homotopy from \(\mathrm{id}_X\) to \(\psi\varphi\), followed by
\[
(\psi\times\psi)\circ H_N\circ
\bigl((\varphi\times\varphi)\times\mathrm{id}_{[0,1]}\bigr),
\]
one obtains a homotopy on \(F'\) starting at the inclusion.

Since
\[
(\varphi\psi\times\varphi\psi)^*\Omega
\]
and \(\Omega\) represent the same cohomology class on \(Y\times Y\),
their difference is exact. The integral contributed by this exact
difference is uniformly bounded. The integrals along the fixed
homotopies are also uniformly bounded because the spaces involved are
compact. Consequently, there exists a constant \(C'>0\), independent of
\(N\), such that the resulting homotopy satisfies the controlled
condition with respect to \(\Omega_X\), and its final
\(\Omega_X\)-integral is at most
\[
-N+C'.
\]
Replacing \(N\) by \(N+C'\) gives the required integral threshold
\(-N\). Therefore
\[
\TC(X,[\varphi^*\omega])\le \TC(Y,[\omega]).
\]
The reverse inequality is analogous.
\end{proof}

\subsection{Closed 1-forms without zeros}

We give a simple computable example.

\begin{proposition}
Let \(M\) be a closed manifold, and let \(\omega\) be a closed 1-form on
\(M\) with no zeros. Then
\[
\TC(M,[\omega])=0.
\]
\end{proposition}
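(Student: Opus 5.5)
To prove \(\TC(M,[\omega])=0\), I will show that for every \(N>0\) the whole product \(X\times X=M\times M\) can serve as the movable set \(F^{(N)}\), with no motion-planning domains \(F_i^{(N)}\) at all. By Proposition~\ref{prop:representative-independence} I may take \(\Omega=p_2^*\omega-p_1^*\omega\). Then the controlled homotopy can be built coordinatewise: keep the first coordinate fixed and push the second coordinate along a gradient-like flow of \(\omega\).

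Since \(\omega\) has no zeros, I choose a Riemannian metric on \(M\) and let \(v\) be the vector field dual to \(\omega\), so that \(\omega(v)=|\omega|^2\). By compactness of \(M\) and the absence of zeros, there is \(\varepsilon>0\) with \(\omega(v)\ge\varepsilon\) everywhere. Let \(\varphi_t\) be the flow of \(-v\), which is complete because \(M\) is closed. For every \(y\in M\) and \(T\ge0\),
\[
\int_0^T \omega\bigl(\dot\varphi_t(y)\bigr)\,dt=-\int_0^T\omega(v)(\varphi_t(y))\,dt\le-\varepsilon T .
\]
Moreover, the function \(t\mapsto\int_{\varphi_{[0,t]}(y)}\omega\) is nonincreasing and starts at \(0\).

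Given \(N\), I put \(T_N=N/\varepsilon\) and define
\[
H_N\bigl((x,y),t\bigr)=\bigl(x,\varphi_{tT_N}(y)\bigr),\qquad (x,y)\in M\times M,\ t\in[0,1].
\]
This map is continuous and satisfies \(H_N(z,0)=z\). Writing \(\gamma_N^z(t)=H_N(z,t)\), the path in the first coordinate is constant and contributes nothing. Hence \(\int_{\gamma_N^z|_{[0,t]}}\Omega\) equals the \(\omega\)-integral along \(s\mapsto\varphi_{s}(y)\) for \(s\in[0,tT_N]\). This integral is \(\le 0\), so the bound holds with any \(C>0\), for instance \(C=1\), which is independent of \(N\). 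At the final time the integral is \(\le-\varepsilon T_N=-N\).

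Thus the covering \(M\times M=F^{(N)}\) with \(k=0\) satisfies both conditions (1) and (2) of Definition~\ref{def:TComega} for every \(N\), and hence \(\TC(M,[\omega])=0\). Condition (1) is vacuous when \(k=0\), and the definition's phrase ``smallest integer \(k\)'' allows \(k=0\). The only real point to secure is the uniform lower bound \(\varepsilon\) on \(\omega(v)\), which follows from compactness together with the nonvanishing of \(\omega\). Without it, the time needed to reach \(-N\) would not be uniform in \(y\), and neither the continuity nor the threshold would be guaranteed.
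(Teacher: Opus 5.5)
Your proof is correct and takes essentially the same approach as the paper. The whole of \(M\times M\) is the movable set, and a gradient flow whose rate of decrease is bounded below (by compactness and the absence of zeros) gives \(\int\Omega\le 0\) at all times and \(\int\Omega\le -N\) at time \(T_N\). The only difference is cosmetic: the paper flows along \(-\Omega^{\sharp}\) on \(M\times M\), while you fix the first coordinate and flow only the second along the negative gradient of \(\omega\) on \(M\).
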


\begin{proof}
Take
\[
\Omega=p_2^*\omega-p_1^*\omega
\]
on \(M\times M\). Since \(\omega\) has no zeros, \(\Omega\) has no zeros.

Fix a Riemannian metric \(G\) on \(M\times M\), and let
\[
V=-\Omega^{\sharp_G}
\]
be the negative metric dual of \(\Omega\). Then
\[
\Omega(V)=-|V|_G^2.
\]
Since \(\Omega\) has no zeros and \(M\times M\) is compact, there exists
\(a>0\) such that
\[
\Omega(V)\le -a
\]
on \(M\times M\).

Let \(\phi_t\) be the flow generated by \(V\). Given \(N>0\), choose
\(T_N>0\) such that
\[
aT_N\ge N.
\]
Put
\[
F=M\times M
\]
and define
\[
H_N(z,s)=\phi_{sT_N}(z).
\]
Then
\[
\int_{H_N(z,\cdot)|_{[0,s]}}\Omega
=
\int_0^{sT_N}\Omega(V)(\phi_t(z))\,dt
\le 0
\]
for every \(s\in[0,1]\), and
\[
\int_{H_N(z,\cdot)}\Omega
=
\int_0^{T_N}\Omega(V)(\phi_t(z))\,dt
\le -aT_N
\le -N.
\]
Thus the whole space \(M\times M\) is the movable part in
Definition~\ref{def:TComega}. Hence
\[
\TC(M,[\omega])=0.
\]
\end{proof}

For example, if \(d\theta\) denotes the angular form on \(S^1\), then
\[
\TC(S^1,[d\theta])=0.
\]

\subsection{\texorpdfstring{A conditional lower bound for \(\TC(X,[\omega])\)}
{A conditional lower bound for TC(X,[omega])}}

We give a conditional comparison with the Lusternik--Schnirelmann
category of a closed 1-form.

\begin{proposition}
\label{prop:cat-lower-bound}
Let \(X\) be a finite connected cell complex, and let \(\omega\) be a
closed 1-form on \(X\). In the definition of \(\TC(X,[\omega])\), take
\[
\Omega=p_2^*\omega-p_1^*\omega.
\]
Assume that the movable part can be chosen so that there exist a point
\(x_0\in X\) and a constant \(C>0\), independent of \(N\), satisfying
\[
\int_{\alpha_N^{x_0,y}}\omega\le C
\]
for every \(y\) such that \((x_0,y)\) belongs to the movable part. Then
\[
\cat(X,[\omega])\le \TC(X,[\omega]).
\]
\end{proposition}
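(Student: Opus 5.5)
The plan is to imitate the classical argument for \(\cat(X)\le\TC(X)\): restrict the covering of \(X\times X\) to the slice \(\{x_0\}\times X\). Fix \(N>0\) and take an open covering
\[
X\times X=F^{(N')}\cup F_1^{(N')}\cup\cdots\cup F_k^{(N')}
\]
as in Definition~\ref{def:TComega}, with \(k=\TC(X,[\omega])\), \(\Omega=p_2^*\omega-p_1^*\omega\), and \(N'\) to be chosen later depending on \(N\). Let \(i_{x_0}\colon X\to X\times X\), \(y\mapsto(x_0,y)\), and put
\[
U=i_{x_0}^{-1}\bigl(F^{(N')}\bigr),\qquad U_i=i_{x_0}^{-1}\bigl(F_i^{(N')}\bigr).
\]
These sets are open and cover \(X\). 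I will then show that each \(U_i\) is contractible in \(X\) and that \(U\) is \(N\)-movable with respect to \(\omega\). This gives \(\cat(X,[\omega])\le k\).

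For the sets \(U_i\), the section \(s_i\) gives for each \(y\in U_i\) a path \(s_i(x_0,y)\) from \(x_0\) to \(y\), depending continuously on \(y\). Traversing these paths backwards defines a homotopy \(U_i\times[0,1]\to X\) from the inclusion to the constant map at \(x_0\). Hence each \(U_i\) is contractible in \(X\). This step is routine.

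For \(U\), write \(H_{N'}((x_0,y),t)=(\alpha^{x_0,y}(t),\beta^{x_0,y}(t))\) and use \(h(y,t)=\beta^{x_0,y}(t)\) as the candidate movement. It is continuous and satisfies \(h(y,0)=y\). By the remark following Definition~\ref{def:TComega},
\[
\int_{\beta^{x_0,y}}\omega=\int_{\gamma^{(x_0,y)}_{N'}}\Omega+\int_{\alpha^{x_0,y}}\omega\le -N'+C,
\]
where \(C\) is the constant from the hypothesis. The key point is that this \(C\) is independent of \(N'\). Choosing \(N'\ge N+C\) then gives \(\int_{\beta^{x_0,y}}\omega\le -N\), so \(U\) is \(N\)-movable. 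Since \(N\) was arbitrary, \(\cat(X,[\omega])\le k\). (If \(\cat\) is defined with a fixed representative \(\omega\), this is exactly the representative used here, so no representative-independence argument is needed.)

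The main obstacle is precisely this last step. The movable homotopy in \(X\times X\) only controls the difference of the \(\omega\)-integrals along the two coordinates. A large negative value could come from the first coordinate running to \(+\infty\) rather than the second running to \(-\infty\). The hypothesis on \(\alpha^{x_0,y}\) is exactly what excludes this, and I would check carefully that it is applied uniformly in \(N'\), as the statement asserts. The intermediate-time bound in condition (2) of Definition~\ref{def:TComega} is not needed, since \(N\)-movability only constrains the endpoint integral.
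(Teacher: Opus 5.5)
Your proposal is correct and follows the paper's proof essentially step for step. Both restrict the cover to the slice \(\{x_0\}\times X\) and contract each \(U_i\) by reversing the paths \(s_i(x_0,y)\). Both make \(U\) movable through the second coordinate \(\beta\), running the movable homotopy in \(X\times X\) at threshold \(N+C\) so that the uniform bound \(\int_{\alpha}\omega\le C\) forces \(\int_{\beta}\omega\le -N\).
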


\begin{proof}
Assume that
\[
\TC(X,[\omega])=k.
\]
For every \(N>0\), there exists an open covering
\[
X\times X
=
F\cup F_1\cup\cdots\cup F_k
\]
satisfying the defining conditions with respect to
\[
\Omega=p_2^*\omega-p_1^*\omega.
\]

Let \(N'>0\), and choose
\[
N=N'+C.
\]
Write the movable homotopy on \(F\) as
\[
H_N((x,y),t)
=
\bigl(\alpha_N^{x,y}(t),\beta_N^{x,y}(t)\bigr).
\]
Define
\[
U=\{y\in X\mid (x_0,y)\in F\},
\qquad
U_i=\{y\in X\mid (x_0,y)\in F_i\}.
\]
Then
\[
X=U\cup U_1\cup\cdots\cup U_k.
\]

For every \(y\in U\), the movable condition gives
\[
\int_{\beta_N^{x_0,y}}\omega
-
\int_{\alpha_N^{x_0,y}}\omega
=
\int_{\gamma_N^{(x_0,y)}}\Omega
\le -N.
\]
By the additional hypothesis,
\[
\int_{\alpha_N^{x_0,y}}\omega\le C.
\]
Hence
\[
\int_{\beta_N^{x_0,y}}\omega
\le C-N
=
-N'.
\]
Thus \(U\) is \(N'\)-movable with respect to \(\omega\).

Each \(U_i\) is contractible in \(X\). Indeed, if
\[
s_i\colon F_i\to PX
\]
is a local section, then the path
\[
s_i(x_0,y)
\]
joins \(x_0\) to \(y\) and depends continuously on \(y\). Reversing
these paths gives a contraction of \(U_i\) to \(x_0\) in \(X\).

Therefore, for every \(N'>0\), the space \(X\) admits an open covering
\[
X=U\cup U_1\cup\cdots\cup U_k
\]
where \(U\) is \(N'\)-movable with respect to \(\omega\), and each
\(U_i\) is contractible in \(X\). Hence
\[
\cat(X,[\omega])
\le k
=
\TC(X,[\omega]).
\]
\end{proof}

\begin{remark}
The additional hypothesis in
Proposition~\ref{prop:cat-lower-bound} is a boundedness condition on the
first coordinate. A zero of a negative gradient flow of \(\omega\)
gives a typical case of this condition, since the first coordinate is
fixed and the negative displacement is detected by the second
coordinate.
\end{remark}

\subsection{\texorpdfstring{An upper bound for \(\TC(X,[\omega])\)}
{An upper bound for TC(X,[omega])}}

We now prove an upper bound for \(\TC(X,[\omega])\) in terms of the
controlled Lusternik--Schnirelmann category of closed 1-forms
\(\ccat(X,[\omega])\) in the sense of Farber--Sch\"utz
\cite[Definition~9.4]{FS01}.

\begin{proposition}
\label{prop:TC-upper-by-controlled-cat}
Let \(X\) be a finite connected cell complex, and let \(\omega\) be a
closed 1-form on \(X\). Set
\[
\eta
=
p_2^*[\omega]-p_1^*[\omega]
\in H^1(X\times X;\R).
\]
Then
\[
\TC(X,[\omega])
\le
\ccat(X\times X,\eta).
\]
\end{proposition}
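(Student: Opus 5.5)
We unwind the definition of the controlled category \(\ccat\) of Farber--Sch\"utz \cite[Definition~9.4]{FS01} on the space \(X\times X\) with respect to \(\eta\), and check that the covering it supplies already satisfies the defining conditions of Definition~\ref{def:TComega}. Recall that \(\ccat(Y,\eta)\le k\) means the following. Fix a closed 1-form \(\Omega\) on \(Y\) representing \(\eta\). Then there is a constant \(C>0\) such that for every \(N>0\) there is an open covering \(Y=F\cup F_1\cup\cdots\cup F_k\) in which each \(F_i\) is null-homotopic in \(Y\), and \(F\) admits a homotopy \(H_N\colon F\times[0,1]\to Y\) starting at the inclusion. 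Along this homotopy the \(\Omega\)-integral on every initial segment is bounded above by \(C\), and the total integral is at most \(-N\). We take \(Y=X\times X\) and \(\Omega\) any closed 1-form with \([\Omega]=p_2^*[\omega]-p_1^*[\omega]=\eta\). By Proposition~\ref{prop:representative-independence}, the choice of \(\Omega\) is irrelevant, and the same bounded-exact-difference argument shows that \(\ccat\) does not depend on it either.

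The movable set \(F\) and the homotopy \(H_N\) then match condition (2) of Definition~\ref{def:TComega} verbatim, with the same uniform constant \(C\). The real issue is condition (1): each \(F_i\) must carry a continuous section of \(\pi\colon PX\to X\times X\). This is the standard argument giving \(\TC(X)\le\cat(X\times X)\) \cite{Far01}. Since \(X\) is connected, we may take a contraction \(G\colon F_i\times[0,1]\to X\times X\) with \(G(z,0)=z\) and \(G(z,1)=(x_0,x_0)\) for a fixed base point \(x_0\). If the contraction lands at some other point, or at a point varying continuously, we first compose with paths to \(x_0\); this is possible because \(X\times X\) is path connected. Writing \(G(z,t)=(a_z(t),b_z(t))\) for \(z=(x,y)\), we define
\[
s_i(x,y)=a_z\cdot\overline{b_z},
\]
the path that follows \(a_z\) from \(x\) to \(x_0\) and then runs \(b_z\) backwards from \(x_0\) to \(y\). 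This path depends continuously on \(z\) and satisfies \(\pi\circ s_i=\mathrm{id}\).

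The covering therefore satisfies both conditions of Definition~\ref{def:TComega} with \(k\) planning sets for every \(N\), so \(\TC(X,[\omega])\le k\). Taking \(k=\ccat(X\times X,\eta)\) gives the proposition.

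The main obstacle is a careful comparison of the precise form of \cite[Definition~9.4]{FS01} with Definition~\ref{def:TComega}. The Farber--Sch\"utz definition may be phrased with an upper bound on the integral along the whole homotopy, or with a constant allowed to depend on the representative. If so, we would reconcile the two by adjusting \(C\) by twice the sup-norm bound of the exact difference, as in the proof of Proposition~\ref{prop:representative-independence}, and by reindexing \(N\) to absorb the shift.
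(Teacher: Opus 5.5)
Your proposal is correct and follows the same route as the paper. You take the open cover supplied by \(\ccat(X\times X,\eta)\), observe that its controlled movable set satisfies condition (2) of Definition~\ref{def:TComega} verbatim, and use contractibility of each \(G_i\) in \(X\times X\) to produce sections of \(\pi\). The only difference is that you write out the standard section \(a_z\cdot\overline{b_z}\) explicitly, including moving the contraction's endpoint to \((x_0,x_0)\), whereas the paper only asserts that such a section exists.
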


\begin{proof}
Let
\[
k=\ccat(X\times X,\eta),
\]
and choose a closed 1-form \(\Omega\) on \(X\times X\) satisfying
\[
[\Omega]=\eta.
\]
By the definition of \(\ccat\), for every \(N>0\), there exists an open
covering
\[
X\times X
=
F\cup G_1\cup\cdots\cup G_k
\]
such that \(F\) satisfies the controlled movable condition with respect
to \(\Omega\), and each \(G_i\) is contractible in \(X\times X\).

Since each \(G_i\) is contractible in \(X\times X\), the endpoint
fibration
\[
\pi\colon PX\to X\times X
\]
admits a continuous section over \(G_i\). Hence each \(G_i\) is a local
motion-planning domain.

The controlled movable condition for \(F\) with respect to \(\Omega\)
is exactly the movable condition in
Definition~\ref{def:TComega}. Therefore the above covering is admissible
for \(\TC(X,[\omega])\), and hence
\[
\TC(X,[\omega])
\le k
=
\ccat(X\times X,\eta).
\]
\end{proof}

\begin{corollary}
\label{cor:TC-upper-by-cat}
Let \(X\) be a finite connected cell complex, and let \(\omega\) be a
closed 1-form on \(X\). Then
\[
\TC(X,[\omega])
\le
\ccat(X,-[\omega])+\ccat(X,[\omega])-1.
\]
\end{corollary}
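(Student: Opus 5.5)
The corollary should follow from Proposition~\ref{prop:TC-upper-by-controlled-cat} together with a product inequality for the controlled category of Farber--Sch\"utz. By that proposition it suffices to show
\[
\ccat\bigl(X\times X,\;p_2^*[\omega]-p_1^*[\omega]\bigr)\le \ccat(X,-[\omega])+\ccat(X,[\omega])-1 .
\]
Write \(\eta=p_1^*(-[\omega])+p_2^*[\omega]\). The class on the first factor is \(-[\omega]\) and the class on the second is \([\omega]\), which matches the two summands on the right. So the statement reduces to a Künneth-type product inequality \(\ccat(X\times Y,p_1^*\xi+p_2^*\zeta)\le \ccat(X,\xi)+\ccat(Y,\zeta)-1\), applied with \(Y=X\), \(\xi=-[\omega]\), \(\zeta=[\omega]\). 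I would either cite this from \cite{FS01} or prove it along the lines of the classical \(\cat(X\times Y)\le\cat X+\cat Y-1\).

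For the product inequality, let \(m=\ccat(X,\xi)\) and \(n=\ccat(Y,\zeta)\), with representatives \(\omega_1,\omega_2\) and \(\Omega=p_1^*\omega_1+p_2^*\omega_2\). Given \(N\), I would take controlled coverings \(X=U\cup U_1\cup\dots\cup U_m\) and \(Y=V\cup V_1\cup\dots\cup V_n\), where \(U\) and \(V\) are controlled \(N'\)-movable with constants \(C_1,C_2\) and \(N'\) is to be fixed later. The movable part of \(X\times Y\) is \(W=(U\times Y)\cup(X\times V)\). On \(U\times Y\) the homotopy moves the first coordinate and keeps the second fixed. The \(\Omega\)-integral is then the \(\omega_1\)-integral, so the control bound \(C_1\) and the final bound \(-N'\) carry over. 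The case \(X\times V\) is symmetric.

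The remaining products \(U_i\times V_j\) are contractible in \(X\times Y\). I would merge them into \(m+n-1\) sets using the usual staircase argument: pass to closed or open sets built from a partition of unity, or in the CW setting use \(\cat\)-type coverings obtained by thickening. Grouping by \(i+j=\mathrm{const}\) gives disjoint unions of contractible sets, which are again contractible after separation by the standard shrinking trick. This yields the count \(m+n-1\).

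The main obstacle is that \(W\) is a union of two movable sets with different homotopies. These do not obviously glue into one continuous homotopy on an open set, and the staircase argument also needs disjointness, which an open covering with a movable extra piece complicates. My first attempt would use continuous cutoff functions \(\rho_U\) supported in \(U\) and \(\rho_V\) supported in \(V\), chosen so that \(\{\rho_U=1\}\) and \(\{\rho_V=1\}\) are still open sets covering together with the contractible pieces. On \(W\) I would run the first homotopy for time \(\rho_U(x)\) and then the second for time \(\rho_V(y)\), reparametrized. The integrals stay bounded above by \(C_1+C_2\) throughout. On the core region at least one factor reaches \(-N'\) while the other contributes at most its control constant, so choosing \(N'=N+C_1+C_2\) gives \(\le -N\). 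This is exactly where the controlled condition, rather than plain movability, is needed, and it explains why the corollary uses \(\ccat\) rather than \(\cat\).
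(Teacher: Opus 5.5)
Your proposal follows the paper's proof. The paper likewise applies Proposition~\ref{prop:TC-upper-by-controlled-cat}, rewrites $p_2^*[\omega]-p_1^*[\omega]=p_1^*(-[\omega])+p_2^*[\omega]$, and cites the product inequality \cite[Theorem~9]{FS01} for $\ccat$. Your sketch of that inequality is the standard argument: the movable set $(U\times Y)\cup(X\times V)$, homotopies glued by cutoff functions with $N'=N+C_1+C_2$, and staircase merging of the $U_i\times V_j$. It also correctly identifies where the control constant is needed.

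One caveat applies equally to the paper: the inequality with $-1$ fails when both controlled categories vanish. In your sketch this is hidden in the count $m+n-1$, which presupposes $m,n\ge 1$. If $m=0$ there are no products $U_i\times V_j$ at all, and the correct count is $0$, not $m+n-1$.

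Concretely, take $X=S^1$ and $\omega=d\theta$. The negative gradient flow has partial integrals $\le 0$, so $\ccat(S^1,\pm[d\theta])=0$ and the right-hand side equals $-1$. But $\TC(S^1,[d\theta])=0$.

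So the statement, in both your version and the paper's, needs the hypothesis that $\ccat(X,[\omega])$ and $\ccat(X,-[\omega])$ are not both zero. If exactly one of them vanishes, $X\times X$ is entirely movable by moving a single coordinate, and the bound holds trivially. Compare the positivity hypothesis the paper itself imposes in its product inequality for $\TC(X,\xi)$.
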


\begin{proof}
By Proposition~\ref{prop:TC-upper-by-controlled-cat}, one has
\[
\TC(X,[\omega])
\le
\ccat\bigl(
X\times X,
p_2^*[\omega]-p_1^*[\omega]
\bigr).
\]
Since
\[
p_2^*[\omega]-p_1^*[\omega]
=
p_1^*(-[\omega])+p_2^*[\omega],
\]
the product inequality for the controlled
Lusternik--Schnirelmann category of closed 1-forms
\cite[Theorem~9]{FS01} gives
\[
\ccat\bigl(
X\times X,
p_2^*[\omega]-p_1^*[\omega]
\bigr)
\le
\ccat(X,-[\omega])+\ccat(X,[\omega])-1.
\]
Therefore
\[
\TC(X,[\omega])
\le
\ccat(X,-[\omega])+\ccat(X,[\omega])-1.
\]
\end{proof}

\begin{example}
\label{exa:Sigma2-TC-one}
Let \(\omega_\Sigma\) be a closed 1-form on \(\Sigma_2\) such that
\[
0\ne[\omega_\Sigma]\in H^1(\Sigma_2;\R).
\]
By \cite{FS01}, one has
\[
\ccat(\Sigma_2,[\omega_\Sigma])
=
\ccat(\Sigma_2,-[\omega_\Sigma])
=
1.
\]
Moreover, the additional hypothesis of
Proposition~\ref{prop:cat-lower-bound} is satisfied in this case by
using a negative gradient flow on \(\Sigma_2\) \cite{Sch01}.

By Proposition~\ref{prop:cat-lower-bound}, one has
\[
1
=
\cat(\Sigma_2,[\omega_\Sigma])
\le
\TC(\Sigma_2,[\omega_\Sigma]).
\]
On the other hand, by Corollary~\ref{cor:TC-upper-by-cat},
\[
\begin{aligned}
\TC(\Sigma_2,[\omega_\Sigma])
&\le
\ccat(\Sigma_2,-[\omega_\Sigma])
+
\ccat(\Sigma_2,[\omega_\Sigma])
-1\\
&=
1+1-1\\
&=1.
\end{aligned}
\]
Therefore
\[
\TC(\Sigma_2,[\omega_\Sigma])=1.
\]
\end{example}

\begin{corollary}
\label{cor:Sigma2-times-S2-upper}
Let
\[
X=\Sigma_2\times S^2,
\]
and let
\[
\omega=\pi_\Sigma^*\omega_\Sigma,
\]
where \(\omega_\Sigma\) is a closed 1-form on \(\Sigma_2\) satisfying
\[
0\ne[\omega_\Sigma]\in H^1(\Sigma_2;\R).
\]
Then
\[
\TC(X,[\omega])\le 3.
\]
\end{corollary}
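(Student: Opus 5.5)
The plan is to combine Corollary~\ref{cor:TC-upper-by-cat} with the product inequality for the controlled category from \cite[Theorem~9]{FS01}, applied to the product decomposition \(X=\Sigma_2\times S^2\). By Corollary~\ref{cor:TC-upper-by-cat},
\[
\TC(X,[\omega])\le \ccat(X,-[\omega])+\ccat(X,[\omega])-1 ,
\]
so it suffices to prove \(\ccat(X,\pm[\omega])\le 2\).

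For this, note that
\[
[\omega]=\pi_\Sigma^*[\omega_\Sigma]+\pi_{S^2}^*(0).
\]
The product inequality \cite[Theorem~9]{FS01}, in the form already used in the proof of Corollary~\ref{cor:TC-upper-by-cat}, then gives
\[
\ccat(\Sigma_2\times S^2,\pm[\omega])\le \ccat(\Sigma_2,\pm[\omega_\Sigma])+\ccat(S^2,0)-1 .
\]
Example~\ref{exa:Sigma2-TC-one} provides \(\ccat(\Sigma_2,\pm[\omega_\Sigma])=1\). For the zero class, the controlled movable part must be empty by the same bounded-integral argument given after Proposition~\ref{prop:representative-independence}. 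Hence \(\ccat(S^2,0)=\cat(S^2)=2\) in the unreduced convention, realized by covering \(S^2\) with two open disks. This yields \(\ccat(X,\pm[\omega])\le 1+2-1=2\), and therefore \(\TC(X,[\omega])\le 2+2-1=3\).

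The main point to check is that \cite[Theorem~9]{FS01} applies when one of the two classes is zero, and that the normalization of \(\ccat\) for the zero class agrees with the unreduced \(\cat\). If the cited theorem requires nonzero classes, I would instead verify the inequality \(\ccat(\Sigma_2\times S^2,\pm[\omega])\le 2\) directly. Take a controlled movable set \(F\) and a contractible set \(G\) covering \(\Sigma_2\) for each \(N\), and take the two-disk cover \(D_1\cup D_2\) of \(S^2\). Then \(F\times S^2\) is controlled movable, with the homotopy acting on the first factor only; the constant \(C\) is unchanged because \(\omega\) is pulled back from \(\Sigma_2\). The remaining set \(G\times S^2\) is covered by \(G\times D_1\) and \(G\times D_2\), each contractible in \(X\).

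This gives three sets, whereas two contractible sets plus the movable part are needed. To reduce to two, I would use the standard Ganea-type refinement from the proof of the product inequality: choose open covers indexed by a staircase so that sets with equal index sum are disjoint, and merge them. If this step becomes delicate, one can fall back on citing \cite[Theorem~9]{FS01} as stated.
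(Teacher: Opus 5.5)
Your argument is correct and matches the paper's proof: Corollary~\ref{cor:TC-upper-by-cat} reduces the claim to \(\ccat(X,\pm[\omega])\le 2\), which you get from the Farber--Sch\"utz product inequality together with \(\ccat(\Sigma_2,\pm[\omega_\Sigma])=1\) and \(\ccat(S^2,0)=\cat(S^2)=2\). Your fallback also needs no Ganea-type merging, since \(F\times S^2\) together with \(G\times D_1\) and \(G\times D_2\) is already one controlled movable set plus two sets contractible in \(X\), which is exactly \(\ccat(X,\pm[\omega])\le 2\).
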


\begin{proof}
By the product inequality for the controlled
Lusternik--Schnirelmann category of closed 1-forms,
\[
\begin{aligned}
\ccat(X,[\omega])
&=
\ccat\bigl(
\Sigma_2\times S^2,
\pi_\Sigma^*[\omega_\Sigma]
\bigr)\\
&\le
\ccat(\Sigma_2,[\omega_\Sigma])
+
\ccat(S^2,0)
-1.
\end{aligned}
\]
Since
\[
\ccat(\Sigma_2,\pm[\omega_\Sigma])=1
\]
and
\[
\ccat(S^2,0)=\cat(S^2)=2,
\]
we obtain
\[
\ccat(X,[\omega])\le 2.
\]
Similarly,
\[
\ccat(X,-[\omega])\le 2.
\]
By Corollary~\ref{cor:TC-upper-by-cat},
\[
\TC(X,[\omega])
\le
\ccat(X,-[\omega])+\ccat(X,[\omega])-1
\le
2+2-1
=
3.
\]
Therefore
\[
\TC(\Sigma_2\times S^2,[\pi_\Sigma^*\omega_\Sigma])
\le 3.
\]
\end{proof}

%% file: cupproduct.tex
\sloppy

\section{\texorpdfstring{A cup-product lower bound for \(\TC(X,\xi)\)}
{A cup-product lower bound for TC(X,xi)}}

In this section we prove a cohomological lower bound for
\(\TC(X,\xi)\) using local coefficients.  The argument is based on the
cup-product estimate for the Lusternik--Schnirelmann category of closed
1-forms due to Farber and Sch\"utz~\cite{FS01}.

Let \(X\) be a finite connected polyhedron, let
\(\xi\in H^1(X;\R)\), and set \(Y=X\times X\).  For the projections
\(p_1,p_2\colon Y\to X\), put
\[
\eta=p_2^*\xi-p_1^*\xi\in H^1(Y;\R).
\]
Denote the diagonal map by
\[
\Delta\colon X\to Y.
\]

We recall the convention on transcendental local systems from
\cite[Definition~6.1]{FS01}.  Given a finite connected cell complex \(Z\)
and a class \(\theta\in H^1(Z;\R)\), denote by
\[
\operatorname{Ker}(\theta)\subset H_1(Z;\Z)
\]
the subgroup of homology classes on which \(\theta\) evaluates trivially,
and set
\[
H_\theta=H_1(Z;\Z)/\operatorname{Ker}(\theta).
\]
Writing
\[
r=\operatorname{rank} H_\theta,
\]
we have
\[
H_\theta\cong\Z^r.
\]
The set \(V_\theta\) consists of all complex flat line bundles over \(Z\)
whose monodromy is trivial on \(\operatorname{Ker}(\theta)\).  Equivalently,
\[
V_\theta=\operatorname{Hom}(H_\theta,\mathbb C^*)
\cong(\mathbb C^*)^r.
\]
For \(L\in V_\theta\), denote by
\[
\operatorname{Mon}_L\colon \Z[H_\theta]\to\mathbb C
\]
the monodromy homomorphism.  The flat line bundle \(L\) is called
\emph{transcendental} if \(\operatorname{Mon}_L\) is injective.

We recall the notion of a neighborhood of infinity
\cite[Definition~2.2]{FS01}.  Let
\[
p\colon \widetilde Y\to Y
\]
be the covering corresponding to \(\operatorname{Ker}(\eta)\), let
\(H_\eta\) be its group of deck transformations, and let
\[
A\colon \widetilde Y\to H_\eta\otimes\mathbb R
\]
be an Abel--Jacobi map~\cite[Proposition2.1]{FS01}.  A subset \(U\subset\widetilde Y\) is called a
\emph{neighborhood of infinity with respect to \(\eta\)} if there exists
\(c\in\mathbb R\) such that
\[
\bigl\{x\in\widetilde Y
\mid \eta_{\mathbb R}(A(x))>c\bigr\}\subset U.
\]
The property that \(U\) is a neighborhood of infinity is independent of
the choice of the Abel--Jacobi map \(A\) and of the representative of
\(\eta\).

A homology class
\[
z\in H_q(\widetilde Y;\mathbb C)
\]
is said to be \emph{movable to infinity with respect to \(\eta\)} if,
for every neighborhood of infinity
\[
U\subset\widetilde Y
\]
with respect to \(\eta\), the class \(z\) can be represented by a
singular cycle whose support is contained in \(U\).

\begin{theorem}\label{thm:TC-local-system-cup}
Let \(X\) be a finite connected polyhedron, let
\[
0\ne\xi\in H^1(X;\R),
\]
and let \(k\ge0\).  Set
\[
Y=X\times X
\]
and
\[
\eta=p_2^*\xi-p_1^*\xi\in H^1(Y;\R).
\]
Suppose that there exist a transcendental flat complex line bundle
\[
\mathcal L\in V_\eta,
\]
a class
\[
v_0\in H^{d_0}(Y;\mathcal L),
\]
and positive-degree classes
\[
u_1,\dots,u_k\in
\ker\Bigl(
\Delta^*\colon H^{>0}(Y;\mathbb C)
\to H^{>0}(X;\mathbb C)
\Bigr)
\]
such that
\[
v_0\smile u_1\smile\cdots\smile u_k\ne0
\]
in \(H^*(Y;\mathcal L)\).  Then
\[
\TC(X,\xi)\ge k+1.
\]
\end{theorem}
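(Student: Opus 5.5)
We argue by contradiction, adapting the Farber--Sch\"utz cup-length argument for \(\cat(X,\xi)\) \cite{FS01} to the fibration \(\pi\colon PX\to X\times X\). Suppose \(\TC(X,\xi)\le k\). Then for every \(N>0\) there is an open cover
\[
Y=F\cup F_1\cup\cdots\cup F_k
\]
as in Definition~\ref{def:TComega}, with \(F\) movable with respect to \(\Omega\), \([\Omega]=\eta\), and with sections \(s_i\colon F_i\to PX\). We will show that each \(u_i\) restricts to zero on \(F_i\), and that \(v_0\) restricts to zero on \(F\). Relative cup products then finish the argument in the usual way.

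\textbf{Step 1 (motion-planning sets).} A section \(s_i\) gives a homotopy over \(F_i\) from the inclusion \(F_i\hookrightarrow Y\) to a map factoring through \(\Delta\). Concretely, \(((x,y),t)\mapsto (s_i(x,y)(t),y)\) deforms \((x,y)\) to \((y,y)\). Since \(\Delta^*u_i=0\), the restriction \(u_i|_{F_i}\) vanishes. By the exact sequence of the pair, \(u_i\) lifts to a class \(\bar u_i\in H^*(Y,F_i;\mathbb C)\). Since \(\Delta^*\) is applied only to the trivial-coefficient classes \(u_i\), no compatibility with \(\mathcal L\) is needed here.

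\textbf{Step 2 (the movable set).} This is the main obstacle. We need \(v_0|_F=0\) in \(H^{d_0}(F;\mathcal L)\) for \(N\) large, using transcendence of \(\mathcal L\). We follow the proof of \cite[Theorem~6.2]{FS01}. First identify \(H^*(F;\mathcal L)\) with the cohomology of \(C^*(\widetilde F)\otimes_{\Z[H_\eta]}\mathbb C\), twisted by \(\mathrm{Mon}_{\mathcal L}\), where \(\widetilde F=p^{-1}(F)\subset\widetilde Y\).

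Lift the homotopy \(H_N\) to \(\widetilde Y\). The integral of \(\Omega\) along the lifted path is the change of the primitive \(\eta_{\R}\circ A\). Condition (2) of Definition~\ref{def:TComega} therefore says the lifted homotopy pushes \(\widetilde F\) into \(\{\eta_{\R}(A)\le c-N\}\) while never rising above level \(c+C\). The constant \(C\), independent of \(N\), is exactly the control required in the controlled notion of \cite{FS01}. This is why the definition includes it.

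The duality/transcendence argument of \cite{FS01} shows that a cohomology class \(v_0\) with transcendental coefficients, restricted to a set that can be deformed arbitrarily far toward \(-\infty\) with bounded excursion, must vanish once \(N\) is large. One runs the argument with orientation conventions: the neighborhoods of infinity in the \(\eta\) sense are at \(+\infty\), and homology classes movable to infinity detect the non-vanishing. I expect to quote this step from \cite{FS01}, after checking that our movable condition implies their controlled-movable hypothesis for the pair \((Y,\eta)\). Since Proposition~\ref{prop:TC-upper-by-controlled-cat} already identifies the two conditions, this check should be direct. Then \(v_0\) lifts to \(\bar v_0\in H^{d_0}(Y,F;\mathcal L)\).

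\textbf{Step 3 (conclusion).} Using the relative cup product
\[
H^*(Y,F;\mathcal L)\otimes H^*(Y,F_1;\mathbb C)\otimes\cdots\otimes H^*(Y,F_k;\mathbb C)\to H^*(Y,F\cup F_1\cup\cdots\cup F_k;\mathcal L)=H^*(Y,Y;\mathcal L)=0,
\]
we obtain
\[
v_0\smile u_1\smile\cdots\smile u_k=j^*(\bar v_0\smile\bar u_1\smile\cdots\smile\bar u_k)=0.
\]
The sets involved are open, so excision is not an issue. This contradicts the hypothesis, hence \(\TC(X,\xi)\ge k+1\).
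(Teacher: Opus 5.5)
Your Step~2 has a genuine gap, and quoting \cite{FS01} cannot close it. The statement you want is that \(N\)-movability of \(F\) for large \(N\) forces \(v_0|_F=0\) in \(H^{d_0}(F;\mathcal L)\). That statement is false, already in the situation of Example~\ref{exa:Sigma2-TC-one}.

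\textbf{Counterexample.} Take \(X=\Sigma_2\) and a closed 1-form \(\omega\) with a single zero \(q\) whose gradient flow has no homoclinic orbits (as provided by \cite[Proposition~4.1]{Sch01} for an integral class). On \(Y=\Sigma_2\times\Sigma_2\) take the flow of \(-\Theta^{\sharp}\), where \(\Theta=p_2^*\omega-p_1^*\omega\).
\begin{itemize}
\item Let \(F_N\) be the set of points whose trajectory reaches \(\Theta\)-integral \(-N\). It is open and \(N\)-movable along the flow. The integral decreases monotonically, so the control condition is automatic.
\item Its complement \(K_N\) consists of the pairs \((x,y)\) with \(x\) in the unstable set and \(y\) in the stable set of \(q\), whose total \(\Theta\)-integral along the flow is at least \(-N\). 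It is compact and contractible: the flow contracts it to \((q,q)\) because there are no homoclinic chains. Also \(\mathcal L\) is trivial on a small neighborhood of it.
\item Excision and Alexander--Lefschetz duality in the \(4\)-manifold \(Y\) give \(H^d(Y,F_N;\mathcal L)=0\) for \(d<4\). Hence \(H^2(Y;\mathcal L)\to H^2(F_N;\mathcal L)\) is injective.
\item Therefore \(v_0=p_1^*a\smile p_2^*b\ne0\), with \(0\ne a\in H^1(\Sigma_2;L^*)\) and \(0\ne b\in H^1(\Sigma_2;L)\), restricts nontrivially to \(F_N\) for every \(N\).
\end{itemize}

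The reason is structural. Twisted classes on \(F\) come from cycles in \(p^{-1}(F)\subset\widetilde Y\) whose supports depend on \(F\), hence on \(N\), and may span a height range larger than \(N\). Lowering each point by \(N\) does not push such a cycle to infinity. So movable sets do not play the role of null-homotopic sets for transcendental classes, and Step~3 has no relative class \(\bar v_0\) to work with.

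\textbf{What is needed.} You need the homological argument of \cite[Theorem~5]{FS01}, which the paper follows, with the quantifiers in the opposite order. Before \(N\) is chosen, one fixes:
\begin{itemize}
\item a class \(z\in H_*(\widetilde Y;\mathbb C)\) with \(\langle v_0\smile u,p_*z\rangle\ne0\), where \(u=u_1\smile\cdots\smile u_k\) (\cite[Proposition~6.5]{FS01});
\item a compact \(K\subset\widetilde Y\) carrying a cycle \(c\) for \(z\);
\item a neighborhood of infinity \(U\) with respect to \(-\eta\) such that every class of \(H_*(K;\mathbb C)\) homologous in \(\widetilde Y\) to a cycle in \(U\) is movable to infinity.
\end{itemize}
Then \(N\) is taken larger than the height drop from \(K\) into \(U\), and only afterwards is the cover chosen.

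Your Step~1 enters exactly as you wrote it, but only to represent \(u\) by a cocycle \(g\) vanishing on simplices over a shrunken copy of \(F_1\cup\cdots\cup F_k\). After subdividing \(c\), the cycle \(p^*g\cap c\) representing \(z_0=p^*u\cap z\) lies in \(K\) over \(F\). The lifted movable homotopy pushes this single cycle into \(U\), so \(z_0\) is movable to infinity. This contradicts \(\langle v_0,p_*z_0\rangle\ne0\) by \cite[Proposition~6.4, Theorem~4]{FS01}.

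Only one cycle, with support fixed in advance, has to be moved. That is why this argument works exactly where a restriction statement \(v_0|_F=0\) fails.
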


\begin{proof}
Let \(\theta\) be a continuous closed
1-form on \(Y\) representing \(\eta\) and 
\[
p\colon \widetilde Y\to Y
\]
be the covering corresponding to \(\operatorname{Ker}(\eta)\).  We use the proof of     \cite[Theorem~5]{FS01}.
The only modification is that the null-homotopic open sets in the
Lusternik--Schnirelmann category argument are replaced by local-planner
sets.

Assume, for contradiction, that
\[
\TC(X,\xi)\le k.
\]
Since
\[
v_0\smile u_1\smile\cdots\smile u_k\ne 0
\]
in \(H^*(Y;\mathcal L)\), and since \(\mathcal L\) is transcendental,
\cite[Proposition~6.5]{FS01} gives a homology class
\[
z\in H_*(\widetilde Y;\mathbb C)
\]
such that, with
\[
u=u_1\smile\cdots\smile u_k,
\]
one has
\[
\langle v_0\smile u,p_*(z)\rangle\ne0.
\]
Equivalently,
\[
\langle v_0,p_*(p^*u\cap z)\rangle\ne0.
\]

Choose a singular cycle \(c\) representing \(z\), and choose a compact
subpolyhedron
\[
K\subset \widetilde Y
\]
which contains the support of \(c\).
Since \(p\) is the covering associated with \(\operatorname{Ker}(\eta)\), we have
\[
p^*\theta=df
\]
for some continuous function \(f\colon\widetilde Y\to\mathbb R\). 

As in the proof of \cite[Theorem~5]{FS01}, choose a neighborhood of
infinity with respact to \(-\eta\)
\[
U\subset\widetilde Y
\]
with the following property: any homology class in
\[
H_*(K;\mathbb C)
\]
which is homologous in \(\widetilde Y\) to a cycle supported in \(U\) is
movable to infinity with respect to \(-\eta\).

Choose numbers \(c_0<a<b\) such that
\[
f(K)\subset[a,b],
\qquad
f^{-1}((-\infty,c_0))\subset U,
\]
and choose
\[
N>b-c_0.
\]

By the definition of \(\TC(X,\xi)\), there exists an open cover
\[
Y=F\cup G_1\cup\cdots\cup G_k
\]
such that \(F\) is \(N\)-movable with respect to \(\eta\), and each \(G_i\)
admits a continuous local section
\[
s_i\colon G_i\to PX
\]
of the endpoint fibration
\[
\pi\colon PX\to X\times X.
\]

For each \(i\), the local section \(s_i\) homotopes \(G_i\) into the diagonal.
Thus \(u_i|_{G_i}=0\), and hence \(u_i\) may be regarded as a relative class
in \(H^*(Y,G_i;\mathbb C)\).  Taking the relative cup product, we see that
\[
u=u_1\smile\cdots\smile u_k
\]
vanishes on \(G_1\cup\cdots\cup G_k\).

Choose open sets \(A_0,B_0\) and compact subpolyhedra \(A,B\) of \(Y\)
such that
\[
A_0\cup B_0=Y,
\qquad
\overline{A_0}\subset A\subset F,
\qquad
\overline{B_0}\subset B\subset G_1\cup\cdots\cup G_k.
\]
Since
\[
u|_{G_1\cup\cdots\cup G_k}=0,
\]
the class \(u\) may be represented by a singular cocycle \(g\) which
vanishes on every singular simplex contained in \(B_0\).

Choose a singular cycle \(c\) representing \(z\).  After sufficiently
subdividing \(c\), we may assume that the projection to \(Y\) of every
simplex occurring in \(c\) is contained in either \(A_0\) or \(B_0\).
Put
\[
z_0=p^*u\cap z.
\]
Then \(z_0\) is represented by the cycle
\[
p^*g\cap c
\]
(\cite[chapter~5]{Spa01}).
Since \(g\) vanishes on every simplex contained in \(B_0\), the projection
of the support of \(p^*g\cap c\) is contained in \(A_0\subset F\).

We now use the \(N\)-movability of \(F\).  
Since the cycle
\(p^*g\cap c\) is supported in \(K\) and projects into \(F\), the
\(N\)-movable homotopy of \(F\) lifts to a homotopy which decreases \(f\)
by more than \(N\).  Since \(f\le b\) on \(K\) and \(N>b-c_0\), the
resulting cycle is supported in
\[
f^{-1}((-\infty,c_0))\subset U.
\]
Hence \(z_0\) is homologous in \(\widetilde Y\) to a cycle supported in
\(U\).
By the defining property of \(U\), it follows that \(z_0\) is
movable to infinity with respect to \(-\eta\).

On the other hand,
since \(\mathcal L\) is transcendental, 
\[
\langle v_0,p_*(z_0)\rangle\ne0
\]
and \cite[Proposition~6.4]{FS01}, \cite[Theorem~4]{FS01} imply that \(z_0\) is not movable to infinity
with respect to \(-\eta\), which is a contradiction.
Hence
\[
\TC(X,\xi)>k.
\]
\end{proof}

\begin{proposition}\label{prop:Sigma2-times-S2-cup-lower}
Let
\[
X=\Sigma_2\times S^2
\]
and let
\[
\xi=\pi_{\Sigma}^*\xi_{\Sigma}\in H^1(X;\R),
\]
where
\[
0\ne \xi_{\Sigma}\in H^1(\Sigma_2;\R).
\]
Then
\[
\TC(X,\xi)=3.
\]
\end{proposition}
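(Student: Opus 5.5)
The upper bound \(\TC(X,\xi)\le 3\) is Corollary~\ref{cor:Sigma2-times-S2-upper}, so it remains to prove \(\TC(X,\xi)\ge 3\). We do this with Theorem~\ref{thm:TC-local-system-cup} for \(k=2\).

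\emph{Choice of classes.} Set \(Y=X\times X\) and \(\eta=p_2^*\xi-p_1^*\xi\). We need a transcendental \(\mathcal L\in V_\eta\), a class \(v_0\in H^{d_0}(Y;\mathcal L)\), and two zero-divisors \(u_1,u_2\) with \(v_0\smile u_1\smile u_2\ne0\). The \(S^2\)-factor supplies the zero-divisors. Let \(s\in H^2(S^2;\mathbb C)\) be the generator, and let \(q_1,q_2\colon Y\to S^2\) be the two projections onto the sphere factors. Put
\[
u_1=u_2=u:=q_2^*s-q_1^*s .
\]
Then \(\Delta^*u=s-s=0\), and in the Künneth decomposition
\[
u^2=-2\,q_1^*s\smile q_2^*s\ne0 ,
\]
which is the top class of \(S^2\times S^2\).

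The transcendental class comes from the surface factor. Write
\[
Y\cong(\Sigma_2\times\Sigma_2)\times(S^2\times S^2),
\]
and note that \(\eta\) is pulled back from the class \(\eta_\Sigma=p_2^*\xi_\Sigma-p_1^*\xi_\Sigma\) on \(\Sigma_2\times\Sigma_2\). Hence \(V_\eta\) is identified with \(V_{\eta_\Sigma}\), and every \(\mathcal L\in V_\eta\) is pulled back from a transcendental \(\mathcal L_\Sigma\) on \(\Sigma_2\times\Sigma_2\). Such bundles exist, since one can choose monodromy values that are algebraically independent. By the Künneth formula with local coefficients (the second factor is simply connected and carries trivial coefficients),
\[
H^*(Y;\mathcal L)\cong H^*(\Sigma_2\times\Sigma_2;\mathcal L_\Sigma)\otimes H^*(S^2\times S^2;\mathbb C).
\]
So it suffices to find a nonzero \(w\in H^*(\Sigma_2\times\Sigma_2;\mathcal L_\Sigma)\) and set \(v_0=w\otimes1\). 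Then
\[
v_0\smile u^2=-2\,w\otimes(q_1^*s\,q_2^*s)\ne0 .
\]

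\emph{Main obstacle.} The key step is showing that \(H^*(\Sigma_2\times\Sigma_2;\mathcal L_\Sigma)\ne0\) for a transcendental \(\mathcal L_\Sigma\). I would argue via the Euler characteristic. For any rank-one local system,
\[
\chi(\Sigma_2\times\Sigma_2;\mathcal L_\Sigma)=\chi(\Sigma_2)^2=4\ne0,
\]
so some cohomology group is nonzero. This also fits the known value \(\ccat(\Sigma_2,\pm\xi_\Sigma)=1\), which the Farber--Schütz theory detects through such nonvanishing twisted cohomology. The care needed is to check that \(\mathcal L_\Sigma\) really lies in \(V_{\eta_\Sigma}\), i.e.\ that its monodromy factors through \(H_{\eta_\Sigma}\), and that it is injective on \(\Z[H_{\eta_\Sigma}]\). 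Choosing a generic point of \(V_{\eta_\Sigma}\cong(\mathbb C^*)^r\) gives both properties.

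Theorem~\ref{thm:TC-local-system-cup} with \(k=2\) then gives \(\TC(X,\xi)\ge3\). Combined with Corollary~\ref{cor:Sigma2-times-S2-upper}, this yields \(\TC(X,\xi)=3\).
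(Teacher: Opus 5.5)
Your argument is correct. Its skeleton is the paper's: the same zero-divisor $u=q_2^*s-q_1^*s$ from the $S^2$ factor with $u^2=-2\,q_1^*s\smile q_2^*s$, Theorem~\ref{thm:TC-local-system-cup} with $k=2$, and Corollary~\ref{cor:Sigma2-times-S2-upper} for the upper bound.

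The difference is in how you produce $\mathcal L$ and $v_0$. The paper works on $\Sigma_2$ first:
\begin{itemize}
\item it takes a transcendental $L_\Sigma\in V_{\xi_\Sigma}$ and sets $\mathcal L=p_1^*L^*\otimes p_2^*L$;
\item it checks by hand that $(a,b)\mapsto b-a$ induces $H_\eta\cong H_\xi$ and carries the monodromy of $\mathcal L$ to that of $L$, so $\mathcal L$ is transcendental;
\item it computes $\dim H^1(\Sigma_2;L_\Sigma)=2$ from the vanishing of $H^0$ and $H^2$;
\item it uses the explicit degree-$2$ class $v_0=p_1^*a\smile p_2^*b$.
\end{itemize}
You instead take an arbitrary transcendental element of $V_{\eta_\Sigma}$ directly on $\Sigma_2\times\Sigma_2$. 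You then get a nonzero twisted class from $\chi(\Sigma_2\times\Sigma_2)=4\neq0$ alone. This bypasses both the monodromy comparison and the $H^0/H^2$ vanishing, at the price of not knowing what $v_0$ is.

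In fact nothing is lost. A rank-one character of $\pi_1(\Sigma_2)\times\pi_1(\Sigma_2)$ splits as $\rho_1\boxtimes\rho_2$. If it is trivial on $\operatorname{Ker}(\eta_\Sigma)$, which contains every class $(a,a)$ and every $(0,b)$ with $\xi_\Sigma(b)=0$, then $\rho_1=\rho_2^{-1}$ and $\rho_2\in V_{\xi_\Sigma}$. So your $\mathcal L_\Sigma$ is necessarily the paper's bundle $L^*\boxtimes L$. Since transcendence forces $L$ to be nontrivial, your $w$ lies in $H^1(\Sigma_2;L^*)\otimes H^1(\Sigma_2;L)$, in degree $2$, which is exactly where the paper's $v_0$ lives.

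One wording slip: it is not that every $\mathcal L\in V_\eta$ comes from a \emph{transcendental} $\mathcal L_\Sigma$. Rather, pullback identifies $V_{\eta_\Sigma}$ with $V_\eta$ (because $S^2\times S^2$ is simply connected) and preserves transcendence, which is all you use.
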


\begin{proof}
Let
\[
z\in H^2(S^2;\mathbb C)
\]
be the generator, and put
\[
\alpha=\pi_{S^2}^*z\in H^2(X;\mathbb C).
\]
Then
\[
u=p_2^*\alpha-p_1^*\alpha
\]
is a diagonal zero-divisor, since
\[
\Delta^*u=\alpha-\alpha=0.
\]
Moreover,
\[
u^2
=
(p_2^*\alpha-p_1^*\alpha)^2
=
-2\,p_1^*\alpha\smile p_2^*\alpha
\ne 0.
\]

Choose a transcendental flat complex line bundle
\[
L_\Sigma\in V_{\xi_\Sigma}
\]
on \(\Sigma_2\), and put
\[
L=\pi_\Sigma^*L_\Sigma\in V_\xi.
\]
For a nontrivial rank-one local system on \(\Sigma_2\), one has
\[
H^0(\Sigma_2;L_\Sigma)=0,
\qquad
H^2(\Sigma_2;L_\Sigma)=0,
\]
and hence, by the Euler characteristic,
\[
\dim H^1(\Sigma_2;L_\Sigma)=2.
\]
The same holds for \(L_\Sigma^*\).  By the K\"unneth formula, there exist
nonzero classes
\[
a\in H^1(X;L^*),
\qquad
b\in H^1(X;L).
\]
Set
\[
\mathcal L=p_1^*L^*\otimes p_2^*L.
\]
Then
\[
\mathcal L\in V_{p_2^*\xi-p_1^*\xi}.
\]
The homomorphism
\[
H_1(X;\mathbb Z)\oplus H_1(X;\mathbb Z)\longrightarrow H_1(X;\mathbb Z),
\qquad
(a,b)\longmapsto b-a,
\]
induces an isomorphism
\[
H_{p_2^*\xi-p_1^*\xi}\cong H_\xi.
\]
Under this isomorphism, the monodromy of
\(\mathcal L=p_1^*L^*\otimes p_2^*L\) is the monodromy of \(L\).
Hence, if \(L\) is transcendental, then \(\mathcal L\) is transcendental.

Define
\[
v_0=p_1^*a\smile p_2^*b
\in H^2(X\times X;\mathcal L).
\]
Then
\[
v_0\smile u\smile u
=
-2\,p_1^*(a\smile\alpha)\smile p_2^*(b\smile\alpha)
\ne 0.
\]
Therefore Theorem~\ref{thm:TC-local-system-cup} with \(k=2\) implies
\[
\TC(X,\xi)\ge 3.
\]
Combining this lower bound with Corollary~\ref{cor:Sigma2-times-S2-upper}, we obtain
\[
\TC(X,\xi)=3.
\]
\end{proof}

For comparison, one has
\[
\cat(X,\xi)\le 2,
\qquad
\TC(X)=7,
\qquad
\cat(X)=4.
\]
Thus this example shows that \(\TC(X,\xi)\) can take a value different from
those of \(\cat(X,\xi)\), \(\TC(X)\), and (unreduced) \(\cat(X)\).

%% file: productineq_sub.tex
\sloppy

\section{Product inequality}

We prove a product inequality for \(\TC(X,\xi)\).
It is the analogue of the standard product inequalities for topological
complexity and for the Lusternik--Schnirelmann category.

\begin{theorem}[Product inequality]
Let \(X\) and \(Y\) be finite connected cell complexes, and let
\(\omega_X\) and \(\omega_Y\) be closed 1-forms on \(X\) and \(Y\),
respectively.  Assume that
\[
\TC(X,[\omega_X])>0
\quad\text{and}\quad
\TC(Y,[\omega_Y])>0.
\]
Then
\[
\TC\bigl(X\times Y,[p_X^*\omega_X+p_Y^*\omega_Y]\bigr)
\le
\TC(X,[\omega_X])
+
\TC(Y,[\omega_Y])
-1,
\]
where
\[
p_X\colon X\times Y\to X,
\qquad
p_Y\colon X\times Y\to Y
\]
are the projections.
\end{theorem}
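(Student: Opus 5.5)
We follow the standard proof of the product inequality for ordinary topological complexity. The extra ingredient is a "movable" set, handled as in Farber's product inequality for $\cat(X,\xi)$. Set $Z=X\times Y$ and $\omega=p_X^*\omega_X+p_Y^*\omega_Y$, and identify $Z\times Z$ with $(X\times X)\times(Y\times Y)$. On $Z\times Z$ we use
\[
\Omega_Z=q_X^*\Omega_X+q_Y^*\Omega_Y ,
\]
where $q_X,q_Y$ are the projections to $X\times X$ and $Y\times Y$, and $\Omega_X,\Omega_Y$ are representatives for $X$ and $Y$. Then $[\Omega_Z]=p_2^*[\omega]-p_1^*[\omega]$, and by Proposition~\ref{prop:representative-independence} we may use this $\Omega_Z$. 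Let $k=\TC(X,[\omega_X])$ and $l=\TC(Y,[\omega_Y])$. For each $N$, fix coverings
\[
X\times X=F\cup F_1\cup\cdots\cup F_k,\qquad Y\times Y=E\cup E_1\cup\cdots\cup E_l ,
\]
with movable homotopies $H_N^X$, $H_N^Y$ (bounds $C_X,C_Y$) and local sections $s_i$, $\sigma_j$. Choose these coverings for the parameter $N+C_X+C_Y$.

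\emph{Step 1 (movable part).} Put
\[
\mathcal F=(F\times(Y\times Y))\cup((X\times X)\times E).
\]
On $F\times(Y\times Y)$ we move only the $X\times X$ factor by $H^X$ and keep the $Y\times Y$ factor fixed. The partial integrals of $\Omega_Z$ then stay $\le C_X$, and the final integral is $\le -N$. On the second piece we move the $Y\times Y$ factor symmetrically. The two homotopies disagree on the overlap, so a single homotopy must be built. Following the proof of the product inequality for $\cat(X,\xi)$ and $\ccat$ (\cite[Theorem~9]{FS01}), we choose closed subsets $A\subset F$, $B\subset E$ that still cover together with the other sets, and a partition-of-unity function $\rho$ equal to $1$ near $A\times(Y\times Y)$ and $0$ near $(X\times X)\times B$. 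The combined homotopy first runs $H^X$ with time parameter scaled by $\rho$, then $H^Y$ with time scaled by $1-\rho$. Its partial integrals are bounded by $C_X+C_Y$, since each factor's partial integrals are bounded by its $C$. At points where $\rho=1$ the $X$-part contributes $\le -(N+C_X+C_Y)$ and the $Y$-part contributes nothing, and symmetrically where $\rho=0$.

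\emph{Step 2 (the obstacle).} The main obstacle is the intermediate region $0<\rho<1$. There a truncated homotopy $H^X|_{[0,\rho]}$ has no final negative bound. We would therefore shrink the movable part to $\mathcal F'=\{\rho=1\}^\circ\cup\{\rho=0\}^\circ$. The leftover band must then be absorbed by planner sets, which forces arranging $\rho$ so that the band lies inside $F_i\times(Y\times Y)$ or $(X\times X)\times E_j$. This is Farber's trick of replacing $F$ by a closed shrinking and enlarging the $F_i$ slightly. We expect this bookkeeping to be the delicate point. The hypothesis $k,l>0$ guarantees that at least one planner set of each factor exists to absorb the band.

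\emph{Step 3 (planners).} For the remaining closed set
\[
Z\times Z\setminus\mathcal F'\subset\Bigl(\bigcup_i F_i\Bigr)\times\Bigl(\bigcup_j E_j\Bigr),
\]
we use the standard Farber argument for $\TC(X\times Y)\le\TC(X)+\TC(Y)-1$. Take partitions of unity $\{f_i\}$, $\{g_j\}$ subordinate to the covers $\{F_i\}$ and $\{E_j\}$ of the relevant compact sets, and define
\[
W_{(i,j)}=\{f_i>0,\ g_j>0\}.
\]
For $n=2,\dots,k+l$, the sets $V_n=\bigcup_{i+j=n}W_{(i,j)}^{*}$ are disjoint unions over $i+j=n$, where $W^{*}_{(i,j)}=\{f_i>f_{i'},\ g_j>g_{j'}\text{ for the appropriate competing indices}\}$. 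On $W_{(i,j)}$ the section $s_i\times\sigma_j$ gives paths in $Z$. This yields $k+l-1$ open planner sets together with the movable set, so
\[
\TC(Z,[\omega])\le k+l-1 .
\]
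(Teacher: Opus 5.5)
Your overall architecture matches the paper's: the product form $q_X^*\Omega_X+q_Y^*\Omega_Y$, an enlarged threshold, a movable part built from $F\times(Y\times Y)$ and $(X\times X)\times E$, and in Step~3 the product planners $s_i\times\sigma_j$ recombined into $k+l-1$ open sets by the usual argument for $\TC(X\times Y)\le\TC(X)+\TC(Y)-1$. The gap is in Steps~1--2.

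First, $\rho$ cannot exist as stated. The sets $A\times(Y\times Y)$ and $(X\times X)\times B$ meet in $A\times B$, which is nonempty unless one factor is already covered by its planner sets. Second, the single-parameter scheme cannot be rescued by absorbing the band $\{0<\rho<1\}$ into planner sets. The only planners you have on $Z\times Z$ live on $\bigcup_{i,j}F_i\times E_j$. So for $p\notin\bigcup_iF_i$, the connected set $\{p\}\times(Y\times Y)$ must avoid the band. It contains points $(p,q')$ with $q'\notin E$, where only the $X$-factor can be moved, so $\rho\equiv1$ on it. Symmetrically, $\rho\equiv0$ on $(X\times X)\times\{q\}$ for $q\notin\bigcup_jE_j$. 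This is contradictory at $(p,q)$ as soon as $E\ne Y\times Y$ and $F\ne X\times X$. The hypothesis $k,l>0$ plays no role here. Hence the inclusion $Z\times Z\setminus\mathcal F'\subset(\bigcup_iF_i)\times(\bigcup_jE_j)$, on which Step~3 rests, is not established.

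The missing idea is that the control constant in Definition~\ref{def:TComega} makes it harmless to move \emph{both} factors at once, so no interpolation between them is needed. You noticed the bound $C_X+C_Y$ on partial integrals but did not exploit it for the final integral.

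Choose $\phi_X\colon X\times X\to[0,1]$ equal to $1$ near $(X\times X)\setminus\bigcup_iF_i$, with support inside $F$. Put $\widetilde H^X(z,t)=H^X(z,\phi_X(z)t)$ on $F$ and $\widetilde H^X(z,t)=z$ off the support, and define $\phi_Y$, $\widetilde H^Y$ likewise. These are homotopies of all of $X\times X$ and $Y\times Y$. Each partial integral of $\widetilde H^X$ is a partial integral of $H^X$, hence $\le C_X$, and the final integral is $\le-(N+C_X+C_Y)$ wherever $\phi_X=1$.

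Now run $\widetilde H^X\times\widetilde H^Y$ simultaneously. All partial $\Omega_Z$-integrals are $\le C_X+C_Y$. On $\Int\{\phi_X=1\}\times(Y\times Y)$ and on $(X\times X)\times\Int\{\phi_Y=1\}$ the final integral is $\le -N$, because the truncated factor contributes at most $C_Y$, resp.\ $C_X$. The complement of this open set lies in $(\bigcup_iF_i)\times(\bigcup_jE_j)$, so no band arises and your Step~3 completes the proof.

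This is precisely the paper's route. It takes such globally defined cut-off homotopies from \cite[Lemma~10.1]{FS01} on neighborhoods $V_X\supset A_X$, $V_Y\supset A_Y$ and uses their product.
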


\begin{proof}
The proof follows the product formula for closed 1-forms in
\cite[Theorem~9]{FS01}.

Put
\[
r=\TC(X,[\omega_X]),
\qquad
s=\TC(Y,[\omega_Y]).
\]
Let
\[
\Omega_X=p_2^*\omega_X-p_1^*\omega_X
\]
be the closed 1-form on \(X\times X\) associated with \(\omega_X\), and let
\[
\Omega_Y=p_2^*\omega_Y-p_1^*\omega_Y
\]
be the corresponding closed 1-form on \(Y\times Y\).

We identify
\[
(X\times Y)\times (X\times Y)
\cong
(X\times X)\times (Y\times Y)
\]
by sending
\[
((x_0,y_0),(x_1,y_1))
\]
to
\[
((x_0,x_1),(y_0,y_1)).
\]
Writing \(\operatorname{pr}_X\) and \(\operatorname{pr}_Y\) for the
projections onto \(X\times X\) and \(Y\times Y\), respectively, the closed
1-form associated with \(p_X^*\omega_X+p_Y^*\omega_Y\) becomes
\[
\operatorname{pr}_X^*\Omega_X+\operatorname{pr}_Y^*\Omega_Y.
\]

Fix \(N>0\).  Choose \(N'>0\) sufficiently large.  By the definition of
\(\TC(X,[\omega_X])\) and \(\TC(Y,[\omega_Y])\), there exist open covers
\[
X\times X
=
F_X
\cup U_{X,1}\cup\cdots\cup U_{X,r}
\]
and
\[
Y\times Y
=
F_Y
\cup U_{Y,1}\cup\cdots\cup U_{Y,s}
\]
such that \(F_X\) is \(N'\)-movable with respect to \(\Omega_X\),
\(F_Y\) is \(N'\)-movable with respect to \(\Omega_Y\), and each
\(U_{X,i}\) and \(U_{Y,j}\) admits a continuous local section of the
corresponding endpoint fibration.

Set
\[
A_X=(X\times X)\setminus(U_{X,1}\cup\cdots\cup U_{X,r}),
\]
and
\[
A_Y=(Y\times Y)\setminus(U_{Y,1}\cup\cdots\cup U_{Y,s}).
\]
Then \(A_X\) and \(A_Y\) are closed, and
\[
A_X\subset F_X,
\qquad
A_Y\subset F_Y.
\]

By \cite[Lemma~10.1]{FS01}, applied to
\(A_X\subset F_X\) and \(A_Y\subset F_Y\), we may choose open neighborhoods
\[
V_X\supset A_X,
\qquad
V_Y\supset A_Y,
\]
and homotopies of \(X\times X\) and \(Y\times Y\), respectively, whose
restrictions to \(V_X\) and \(V_Y\) have the required \(N'\)-movability
properties with respect to \(\Omega_X\) and \(\Omega_Y\).

Define
\[
F=
\bigl(V_X\times (Y\times Y)\bigr)
\cup
\bigl((X\times X)\times V_Y\bigr).
\]
Using the homotopies on \(X\times X\) and \(Y\times Y\), we obtain a homotopy
of
\[
(X\times X)\times (Y\times Y).
\]
Restricted to \(F\), this homotopy makes \(F\) \(N\)-movable with respect to
\[
\operatorname{pr}_X^*\Omega_X+\operatorname{pr}_Y^*\Omega_Y,
\]
provided \(N'\) was chosen sufficiently large.

More explicitly, for every point
\[
q=((x_0,y_0),(x_1,y_1))\in F,
\]
one obtains paths
\[
\alpha_N^q,\qquad \beta_N^q
\]
in \(X\times Y\) such that, for some constant \(C>0\) independent of \(N\)
\cite[Lemma~10.1]{FS01},
\[
\int_{\beta_N^q|_{[0,t]}}
(p_X^*\omega_X+p_Y^*\omega_Y)
-
\int_{\alpha_N^q|_{[0,t]}}
(p_X^*\omega_X+p_Y^*\omega_Y)
\le C
\]
for all \(t\in[0,1]\), and
\[
\int_{\beta_N^q}
(p_X^*\omega_X+p_Y^*\omega_Y)
-
\int_{\alpha_N^q}
(p_X^*\omega_X+p_Y^*\omega_Y)
\le -N.
\]
Thus \(F\) is \(N\)-movable with respect to this form.

It remains to cover the complement of \(F\) by local planner domains.  Since
\(A_X\subset V_X\) and \(A_Y\subset V_Y\), we have
\[
(X\times X)\setminus V_X
\subset
(X\times X)\setminus A_X
=
U_{X,1}\cup\cdots\cup U_{X,r},
\]
and
\[
(Y\times Y)\setminus V_Y
\subset
(Y\times Y)\setminus A_Y
=
U_{Y,1}\cup\cdots\cup U_{Y,s}.
\]
Therefore,
\[
\begin{split}
&\bigl((X\times X)\times(Y\times Y)\bigr)\setminus F\\
&\qquad =
\bigl((X\times X)\setminus V_X\bigr)
\times
\bigl((Y\times Y)\setminus V_Y\bigr).
\end{split}
\]
Hence the complement of \(F\) is covered by the \(rs\) open sets
\[
U_{X,i}\times U_{Y,j},
\qquad
1\le i\le r,\quad 1\le j\le s.
\]
On each product \(U_{X,i}\times U_{Y,j}\), a local section is obtained by
taking the product of the local sections on \(X\) and \(Y\).

By the standard product-cover argument for topological complexity
\cite{Far01}, these \(rs\) product domains can be replaced by
\(r+s-1\) open sets, each admitting a continuous local section.
Therefore, for every \(N>0\), the space
\[
(X\times Y)\times (X\times Y)
\]
admits an open cover consisting of one \(N\)-movable set and \(r+s-1\)
local planner sets.  Hence
\[
\TC\bigl(X\times Y,[p_X^*\omega_X+p_Y^*\omega_Y]\bigr)
\le r+s-1.
\]
This proves the theorem.
\end{proof}

\begin{remark}
If either \(\TC(X,[\omega_X])=0\) or \(\TC(Y,[\omega_Y])=0\), then
\[
\TC\bigl(X\times Y,[p_X^*\omega_X+p_Y^*\omega_Y]\bigr)=0.
\]
Therefore, it is enough to consider the case where both factor invariants are
positive.
\end{remark}

%% file: navigation_finalfantasy.tex
\sloppy

\section{A navigation-function-type argument}

In this section we give a navigation-function-type argument for estimating
\(\TC(M,[\omega])\).
We first recall ordinary navigation functions.

\begin{definition}[Navigation function, {\cite[Definition~4.30]{Far04}}]
Let \(M\) be a closed smooth manifold.
A \(C^2\)-function
\[
F\colon M\times M\to\R
\]
is called a \emph{navigation function} if it is a non-negative
Morse--Bott function and
\[
F^{-1}(0)=\Delta_M.
\]
\end{definition}

The gradient flow of a navigation function decomposes \(M\times M\)
into stable manifolds of the critical submanifolds of \(F\).
There are several works in which motion planners are constructed using
gradient flows or geodesics; see, for example, \cite{BC01,MMP01,Mes01}.
To state the resulting estimate, we recall the following relative form
of topological complexity.

\begin{definition}[Subspace topological complexity,
{\cite[Definition~4.20]{Far04}}]
Let \(A\subset M\times M\).
The subspace topological complexity \(\TC_M(A)\) is the least integer
\(r\) such that \(A\) admits an open covering
\[
A=A_1\cup\cdots\cup A_r
\]
and, for each \(i\), there is a continuous map
\[
s_i\colon A_i\to PM
\]
satisfying
\[
\pi_M\circ s_i=\operatorname{id}_{A_i},
\]
where
\[
\pi_M\colon PM\to M\times M,
\qquad
\pi_M(\gamma)=(\gamma(0),\gamma(1)).
\]
\end{definition}

\begin{proposition}[{\cite[Proposition~4.24]{Far04}}]
\label{prop:subspace-TC-cover}
If
\[
A_1,\dots,A_k\subset M\times M
\]
are open subsets which cover \(M\times M\), then
\[
\TC(M)\le \TC_M(A_1)+\cdots+\TC_M(A_k).
\]
\end{proposition}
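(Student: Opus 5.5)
The statement to prove is Proposition~\ref{prop:subspace-TC-cover}: if open sets \(A_1,\dots,A_k\) cover \(M\times M\), then \(\TC(M)\le \sum_j \TC_M(A_j)\). The plan is to combine the local planners of the individual \(A_j\) into one open cover of \(M\times M\) with sections, and then count the pieces.

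First, for each \(j\), set \(r_j=\TC_M(A_j)\). By definition of subspace topological complexity there is an open covering
\[
A_j=A_{j,1}\cup\cdots\cup A_{j,r_j}
\]
together with continuous maps \(s_{j,i}\colon A_{j,i}\to PM\) satisfying \(\pi_M\circ s_{j,i}=\mathrm{id}_{A_{j,i}}\).

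Second, check that these sets are open in \(M\times M\). Each \(A_{j,i}\) is open in \(A_j\), and \(A_j\) is open in \(M\times M\). Hence every \(A_{j,i}\) is open in \(M\times M\).

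Third, check that they cover. Since the \(A_j\) cover \(M\times M\) and each \(A_j\) is the union of its pieces \(A_{j,i}\), the whole family
\[
\{A_{j,i}\mid 1\le j\le k,\ 1\le i\le r_j\}
\]
is an open covering of \(M\times M\). Each member admits the continuous section \(s_{j,i}\) of \(\pi_M\). This is exactly the data required by Definition~\ref{def:TC}, so \(\TC(M)\le r_1+\cdots+r_k\).

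The only point needing care, and it is mild, is the openness just used: openness in \(A_j\) must transfer to openness in \(M\times M\). This is exactly why the hypothesis that the \(A_j\) are open is needed. No merging of planners is required, because the statement only asks for the sum and not a sharper bound.
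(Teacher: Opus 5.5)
Your proof is correct and complete. Each $A_j$ is open in $M\times M$, so its relatively open planner domains $A_{j,i}$ are open in $M\times M$. Together they form an open cover of $M\times M$ by $\sum_j \TC_M(A_j)$ sets, each carrying a continuous section of $\pi_M$, which is exactly what Definition~\ref{def:TC} requires.

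The paper gives no proof of its own; it quotes the result from Farber's book. For open pieces this direct refinement is all that is needed. You are right that openness is the only point of substance. For pieces that need not be open, such as the stable manifolds of positive critical levels mentioned after Theorem~\ref{thm:navigation-function-estimate}, one would also have to extend the local sections to open neighborhoods.
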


The invariant \(\TC_M(A)\) can be smaller than the subspace
Lusternik--Schnirelmann category
\(\cat_{M\times M}(A)\).
For instance,
\[
\TC_M(\Delta_M)=1,
\]
since the constant path gives a global planner on the diagonal.
On the other hand, one has
\[
\cat_{M\times M}(\Delta_M)=\cat(M).
\]
This explains why the navigation-function estimate can be sharper than
the usual LS-category estimate \cite{Costa01}.

\begin{theorem}[{\cite[Theorem~4.32]{Far04}}]
\label{thm:navigation-function-estimate}
Let
\[
F\colon M\times M\to\R
\]
be a navigation function.
Suppose that the positive critical values of \(F\) are
\[
0<c_1<\cdots<c_r,
\]
and put
\[
C_i=\operatorname{Crit}(F)\cap F^{-1}(c_i).
\]
Then
\[
\TC(M)\le 1+\sum_{i=1}^r \TC_M(C_i).
\]
\end{theorem}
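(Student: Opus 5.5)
The plan is to use the negative gradient flow of the navigation function \(F\) to cut \(M\times M\) into pieces attached to the critical levels. Each piece gets planners by first flowing down to a neighbourhood of the corresponding critical set. The total count is then bounded using Proposition~\ref{prop:subspace-TC-cover}.

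\emph{Step 1: the pieces.} Fix a Riemannian metric on \(M\times M\) and let \(\phi_t\) be the flow of \(-\nabla F\). Since \(F\) is Morse--Bott, every trajectory converges as \(t\to\infty\) to a point of \(\operatorname{Crit}(F)\). So \(M\times M\) is the disjoint union of the stable sets \(W^s(\Delta_M)\) and \(W^s(C_i)\), \(i=1,\dots,r\). Choose small \(\epsilon>0\) so that each interval \([c_i-\epsilon,c_i+\epsilon]\) contains no other critical value. Set
\[
A_0=W^s(\Delta_M)=\{z\mid \lim_{t\to\infty}F(\phi_t z)=0\}.
\]
For each \(i\), fix an open tubular neighbourhood \(U_i\) of \(C_i\) that deformation retracts onto \(C_i\) along a retraction \(\rho_i\).

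\emph{Step 2: planners on each piece, and the point-set difficulty.} The critical sets themselves lie in the stable sets but are not open, and stable sets are not open either. The remedy is to work with the open sets
\[
A_i=\{z\mid \phi_T(z)\in U_i \text{ for some } T\},
\]
and to show that \(\TC_M(A_i)\le \TC_M(C_i)\).

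To get this bound, I pull back a planner cover of \(C_i\) via \(\rho_i\) to a cover of \(U_i\). For \(z\in A_i\), the planned path is obtained in three stages:
\begin{itemize}
\item follow the curve \(\phi_t(z)\), which is a pair of paths in \(M\), one in each coordinate, until time \(T(z)\);
\item retract to \(C_i\);
\item apply the planner on \(C_i\).
\end{itemize}
This yields a path from the first coordinate of \(z\) to its second coordinate. The entry time \(T(z)\) must be made continuous; I would use the first entry time into a slightly smaller closed neighbourhood and shrink the sets so that this is continuous.

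The same construction works for a neighbourhood of \(\Delta_M\). There one uses the constant planner on \(\Delta_M\), which gives \(\TC_M=1\).

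\emph{Step 3: the obstacle, and the count.} The main obstacle is that the \(A_i\) are open but their overlaps are uncontrolled. The union still covers \(M\times M\), since every point flows into some \(U_i\) or near \(\Delta_M\). Proposition~\ref{prop:subspace-TC-cover} then gives
\[
\TC(M)\le 1+\sum_{i=1}^r \TC_M(C_i),
\]
provided the bound \(\TC_M(A_i)\le\TC_M(C_i)\) holds and entry times are continuous.

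Continuity of the entry time is where transversality is needed. I would choose each \(U_i\) as the part of an isolating block lying above the level \(c_i-\epsilon\). The exit set of the block is then the level set \(\{F=c_i-\epsilon\}\), which the flow crosses transversally. With this choice, "the time at which \(F\circ\phi_t\) reaches \(c_i+\epsilon\)" is continuous on the set where the trajectory enters the block. This transversal crossing is the step I expect to need the most care.
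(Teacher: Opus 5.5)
The paper gives no proof of this statement; it is quoted from Farber's book. Your outline is correct, but it is organized differently from Farber's argument. Farber works with the stable manifolds \(W^s(\Delta_M),W^s(C_1),\dots,W^s(C_r)\) themselves. These partition \(M\times M\) but are not open. The Morse--Bott condition makes each of them a locally compact ENR on which the flow-limit map \(z\mapsto\lim_{t\to\infty}\phi_t(z)\in C_i\) is continuous. This gives \(\TC_M(W^s(C_i))\le\TC_M(C_i)\), and a subadditivity result for such non-open covers finishes the proof. You instead thicken each stable manifold to an open set carrying a continuous flowing time. Then only the open-cover statement Proposition~\ref{prop:subspace-TC-cover} is needed, and no continuity at \(t=\infty\) is required. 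In your route, Morse--Bott is used only for finiteness of the critical values and for a tubular neighbourhood of \(C_i\) retracting onto \(C_i\). The cost is the continuity issue you identify.

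Your Step 3 should be tightened as follows. The first entry time into a closed set is only lower semicontinuous, so drop that idea. Also replace your original \(A_i\) (entry into \(U_i\) at some time), since the set of times a trajectory spends in \(U_i\) need not be an interval. Fix \(U_i\) first and only then choose \(\varepsilon\). Put \(\tau_i(z)=\inf\{t\ge0\mid F(\phi_t z)\le c_i+\varepsilon\}\). This is continuous on the open set where it is finite, because \(c_i+\varepsilon\) is a regular value. Set \(A_i=\{z\mid \tau_i(z)<\infty,\ \phi_{\tau_i(z)}(z)\in U_i\}\), which is open.

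The covering then needs a fact you did not state: \(W^s(C_i)\cap\{F\le c_i+\varepsilon\}\subset U_i\) once \(\varepsilon\) is small. Suppose not. Then there are \(w_n\in W^s(C_i)\setminus U_i\) with \(F(w_n)\to c_i\). A limit point \(w\notin U_i\) is a regular point at level \(c_i\), so \(F(\phi_1 w)<c_i\). Hence \(F(\phi_1 w_n)<c_i\) for large \(n\), contradicting \(F\ge c_i\) along trajectories converging to \(C_i\). With this fact, \(W^s(C_i)\subset A_i\), and the open sets \(A_0,A_1,\dots,A_r\) cover \(M\times M\). The same construction, with a level \(\delta<c_1\) chosen so that \(\{F\le\delta\}\) lies in a tubular neighbourhood of \(\Delta_M\), shows that \(A_0=W^s(\Delta_M)\) is open with a single planner.

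The isolating block and its exit set at \(c_i-\varepsilon\) are not needed. Replacing \(U_i\) by a block would in fact lose the retraction onto \(C_i\) unless the block lies inside the tubular neighbourhood.
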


Here the term \(1\) comes from the stable manifold of the diagonal
\(\Delta_M=F^{-1}(0)\), where the constant path gives a canonical
planner.
The summation terms come from the stable manifolds of the positive
critical levels, whose contributions are measured by
\(\TC_M(C_i)\).

We now use gradient flows of closed 1-forms to obtain a
navigation-function-type estimate of \(\TC(M,[\omega])\).

Let \((M,g)\) be a closed Riemannian manifold, and let \(\omega\) be a
smooth closed 1-form on \(M\).
Set
\[
Y=\{x\in M\mid \omega_x=0\},
\]
and assume that \(\omega\) is exact on a neighborhood \(U\) of \(Y\),
say
\begin{equation}
\label{navigation:conditionforomega}
\left.\omega\right|_U=dL
\end{equation}
for some smooth function \(L\colon U\to\R\).
Let
\[
V=-\omega^{\sharp_g}.
\]
Then
\[
V|_U=-\grad_g L.
\]
Let \(\phi_t\) denote the flow generated by \(V\).
We shall use the following standard consequence of the absence of
homoclinic chains.

\begin{definition}[Homoclinic chain]
Let \((M,g)\) be a closed Riemannian manifold, let \(\omega\) be a
closed 1-form on \(M\), and let \(\phi_t\) be the flow generated by
\[
V=-\omega^{\sharp_g}.
\]
Suppose that the zero set of \(\omega\) has connected components
\[
\operatorname{Zero}(\omega)=Y_1\cup\cdots\cup Y_k.
\]
A \emph{homoclinic chain} for \(\phi_t\) is a finite cyclically ordered
collection of orbits
\[
\gamma_1,\ldots,\gamma_l
\subset M\setminus\operatorname{Zero}(\omega),
\qquad
\gamma_{l+1}=\gamma_1,
\]
together with indices
\[
1\le i(j)\le k,
\qquad
1\le j\le l,
\]
such that
\[
\omega(\gamma_j)\cup\alpha(\gamma_{j+1})
\subset Y_{i(j)}
\]
for every \(j=1,\ldots,l\). Here \(\omega(\gamma)\) and
\(\alpha(\gamma)\) denote the forward and backward limit sets of the
orbit \(\gamma\), respectively.
\end{definition}

\begin{lemma}[{\cite[Section~4]{Lat01}}]
\label{lem:no-homoclinic-truncated-stable}
Let \((M,g)\) be a closed Riemannian manifold, and let \(\omega\) be a
closed 1-form satisfying
\eqref{navigation:conditionforomega}.
Let
\[
V=-\omega^{\sharp_g},
\]
and denote its flow by \(\phi_t\).
Assume that \(\phi_t\) has no homoclinic chains.
Write
\[
Y=Y_1\cup\cdots\cup Y_r
\]
for the decomposition of the zero set of \(\omega\) into connected
components.

Let \(W_i\subset U\) be prescribed neighborhoods of \(Y_i\).
Then, for every \(N>0\), one may choose closed neighborhoods
\[
B_{N,i}\subset W_i
\]
of \(Y_i\), which are isolating blocks, such that, with
\[
U_{N,i}=
\left\{
x\in M \ \middle|\
\begin{array}{l}
\text{there exists }t_x>0\text{ such that }
\phi_{t_x}(x)\in\Int(B_{N,i}),\\[2mm]
\displaystyle
\int_0^{t_x}\omega(\dot\phi_t(x))\,dt>-N
\end{array}
\right\},
\]
one has an open covering
\[
M=A_N\cup U_{N,1}\cup\cdots\cup U_{N,r},
\]
where
\[
A_N=
\left\{
x\in M
\ \middle|\
\text{there exists }t_x>0\text{ such that }
\int_0^{t_x}\omega(\dot\phi_t(x))\,dt=-N
\right\}.
\]
Moreover, for each \(i\), the flow gives a homotopy from \(U_{N,i}\)
into \(B_{N,i}\).
More precisely, on each \(U_{N,i}\) one may choose a continuous time
function
\[
T_{N,i}\colon U_{N,i}\to[0,\infty)
\]
such that
\[
\phi_{T_{N,i}(x)}(x)\in B_{N,i}
\]
for every \(x\in U_{N,i}\).
\end{lemma}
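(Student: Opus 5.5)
We prove Lemma~\ref{lem:no-homoclinic-truncated-stable} with a Conley-theoretic argument. The only input from the no-homoclinic-chain hypothesis is that the zero components are partially ordered and carry a Lyapunov-type function.

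\textbf{Step 1: Lyapunov data.} The absence of homoclinic chains lets us define a relation \(Y_i\succ Y_j\) whenever there is a chain of orbits running from \(Y_i\) to \(Y_j\). This relation is acyclic, so it extends to a total order. Near each \(Y_i\) we have \(V=-\grad_g L\), so \(L\) strictly decreases along orbits inside \(U\) away from \(Y\). Since \(Y_i\) is a connected component of the zero set, it is an isolated invariant set. We choose isolating blocks \(B_{N,i}\subset W_i\) as sublevel-type neighbourhoods built from \(L\), in the standard way of \cite[Section~4]{Lat01}. These have the usual exit-set property: an orbit leaving \(B_{N,i}\) exits transversally through a closed exit set and never re-enters before the \(\omega\)-integral has decreased by a definite amount. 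The blocks are also shrunk, depending on \(N\), so that any orbit passing through \(B_{N,i}\) near \(Y_i\) loses very little \(\omega\)-integral inside the block.

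\textbf{Step 2: Covering.} Fix \(x\in M\) and consider \(I_x(t)=\int_0^t\omega(\dot\phi_s(x))\,ds=-\int_0^t|V|^2\,ds\), which is nonincreasing in \(t\). There are two cases.
\begin{itemize}
\item If \(I_x(t)\) reaches \(-N\) at some finite time, then \(x\in A_N\).
\item Otherwise \(I_x(t)>-N\) for all \(t\). Then \(\int_0^\infty|V(\phi_s x)|^2\,ds<\infty\), so the \(\omega\)-limit set of \(x\) consists of zeros of \(\omega\) and lies in a single connected component \(Y_i\). Hence \(\phi_t(x)\) eventually enters \(\Int(B_{N,i})\) with integral \(>-N\), so \(x\in U_{N,i}\).
\end{itemize}

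\textbf{Step 3: Openness.} Each \(U_{N,i}\) is open, because both defining conditions (hitting an open set at some finite time, and a strict integral inequality) persist under small perturbation of \(x\) by continuity of the flow. The set \(A_N\) as literally written need not be open. We handle this by replacing \(-N\) with a slightly smaller threshold, that is, by using the open set \(\{x : I_x(t)<-N+\epsilon \text{ for some } t\}\), and then readjusting \(N\). This replacement is harmless for the later applications.

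\textbf{Step 4: Continuous time function.} This is the main obstacle, because a naive first-entry time into \(B_{N,i}\) is not continuous: orbits may graze the boundary tangentially. We will use the isolating-block structure. Take the entrance set of \(B_{N,i}\) to be a transverse hypersurface piece. For \(x\in U_{N,i}\), define \(T_{N,i}(x)\) as the first time the orbit enters the block through its entrance set, followed by a small fixed time inside the block; if \(x\) is already in the block, we set \(T_{N,i}(x)\) by a cutoff. Transversality of the entrance set, together with the implicit function theorem, makes this time continuous. The isolating-block property ensures that orbits of points of \(U_{N,i}\) cannot enter and then exit without losing more than \(N\) of integral, which would place them in the regime of \(A_N\) instead. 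We then check that \(\phi_{T_{N,i}(x)}(x)\in B_{N,i}\). The homotopy \((x,s)\mapsto\phi_{sT_{N,i}(x)}(x)\) gives the asserted deformation of \(U_{N,i}\) into \(B_{N,i}\). We expect the delicate points to be tangencies at the boundary of the block and the compatibility of the choice of block with \(N\).
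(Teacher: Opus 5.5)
Your Steps 2 and 3 are essentially fine. They hold for any closed neighbourhoods with \(Y_i\subset\Int(B_{N,i})\), and they never use the no-homoclinic-chain hypothesis. In fact \(A_N\) is already open as written. Off \(\operatorname{Zero}(\omega)\) the integral \(-\int_0^t|V(\phi_s(x))|^2\,ds\) is strictly decreasing in \(t\), so reaching \(-N\) at \(t_x\) means lying below \(-N\) at \(t_x+1\), which is an open condition. No change of threshold is needed.

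The gap is Step 4, which is the real content of the lemma. First, the first entrance time into \(B_{N,i}\) is discontinuous at points whose orbit touches \(\partial B_{N,i}\) tangentially from outside; the implicit function theorem only controls transverse crossings. Adding ``a small fixed time'' also fails for orbits that clip the block near its tangency set and leave almost at once. Second, and more seriously, you have not excluded that the orbit of a point of \(U_{N,i}\) visits \(B_{N,i}\) in several disjoint time intervals before losing \(N\). In that case a continuous choice can genuinely fail. Near a point whose orbit has one visit degenerating to a tangency, continuity pulls \(T_{N,i}\) toward that visit, while nearby points have no such visit. Third, your appeal to ``the isolating-block property'' does not settle this. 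As written (``cannot enter and then exit without losing more than \(N\)'') the claim is false: orbits cross \(B_{N,i}\) with tiny loss, as you yourself arrange in Step 1. What you need is that an orbit cannot exit and re-enter within loss \(N\). A block gives no control once the orbit leaves \(W_i\), and your ``definite amount'' does not grow with \(N\).

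This non-return property is exactly where the hypothesis must enter, and it cannot be dispensed with. Without it, apply the lemma to \(\omega\) itself with isolated zeros and small balls \(W_i\). This would cover \(M\) by an \(N\)-movable set and \(r\) sets contractible in \(M\), giving \(\cat(M,[\omega])\le r\) for every such form. That would make Theorem~\ref{thm:Farber-homoclinic-cycle} vacuous. In your proposal, the hypothesis only yields an order that is never used.

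The missing argument has two parts. First, choose \(B_{N,i}\subset W_i\) with the mean-value property for the local primitive \(L\): an orbit segment contained in \(W_i\) with both endpoints in \(B_{N,i}\) lies in \(B_{N,i}\), as for Gromoll--Meyer pairs. Second, shrink \(B_{N,i}\), depending on \(N\), so that every orbit segment which leaves \(W_i\) and returns to \(B_{N,i}\) loses more than \(N\). If this failed, segments of loss at most \(N\) returning to ever smaller neighbourhoods of \(Y_i\) would converge to a broken trajectory from \(Y_i\) back to \(Y_i\), i.e.\ a homoclinic chain.

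With both properties, for \(x\in U_{N,i}\) the set of \(t\ge 0\) with \(\phi_t(x)\in B_{N,i}\) and \(\int_0^t\omega(\dot\phi_s(x))\,ds>-N\) is an interval. Indeed, two such times are separated by loss less than \(N\), so the segment between them stays in \(W_i\) and hence in \(B_{N,i}\). Now cover \(U_{N,i}\) by the open sets \(\{x:\phi_t(x)\in\Int(B_{N,i}),\ \int_0^t\omega(\dot\phi_s(x))\,ds>-N\}\), \(t>0\), and take a subordinate partition of unity. This gives \(T_{N,i}\) as a convex combination of admissible times, which is again admissible, and no transversality is required. The paper does not carry this out either; it delegates precisely this step to Latschev's construction.
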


\begin{proof}
This is a consequence of Latschev's construction for flows admitting
Lyapunov 1-forms and no homoclinic chains; see
\cite[Section~4]{Lat01}.
The isolating blocks may be chosen inside the prescribed neighborhoods
\(W_i\).
\end{proof}

\begin{theorem}
\label{thm:gradient-flow-estimate}
Let \((M,g)\) be a closed connected Riemannian manifold, and let
\[
\xi\in H^1(M;\R).
\]
Let \(\Theta\) be a smooth closed 1-form on \(M\times M\) such that
\[
[\Theta]=p_2^*\xi-p_1^*\xi.
\]
Let \(\phi_t\) be the flow generated by
\[
V=-\Theta^{\sharp_{g\oplus g}}.
\]
Assume that \(\Theta\) is exact on a neighborhood of its zero set and
that \(\phi_t\) has no homoclinic chains.

Suppose that the zero set of \(\Theta\) has finitely many connected
ENR components, which we write as
\[
\operatorname{Zero}(\Theta)=Y_1\cup\cdots\cup Y_r.
\]
Then
\[
\TC(M,\xi)\le \sum_{i=1}^r\TC_M(Y_i).
\]
\end{theorem}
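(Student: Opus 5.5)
The plan is to apply Lemma~\ref{lem:no-homoclinic-truncated-stable} to the manifold \(M\times M\) with the product metric \(g\oplus g\), the closed 1-form \(\Theta\), and the flow \(\phi_t\) of \(V=-\Theta^{\sharp_{g\oplus g}}\). The hypotheses match: \(\Theta\) is exact on a neighborhood \(U\) of its zero set, and \(\phi_t\) has no homoclinic chains. Since \(\TC(M,\xi)\) does not depend on the representative, we may use \(\Theta\) as the form \(\Omega\) in Definition~\ref{def:TComega}.

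First, for each \(i\) pick an open cover \(Y_i\subset W_i^1\cup\cdots\cup W_i^{m_i}\) with \(m_i=\TC_M(Y_i)\), together with local sections \(s_i^j\) of \(\pi_M\) on the sets \(W_i^j\). Because \(Y_i\) is an ENR and hence a neighborhood retract, the planners on \(Y_i\cap W_i^j\) extend to open sets \(W_i^j\subset M\times M\). We then shrink these sets: choose a neighborhood \(W_i\subset U\) of \(Y_i\) with \(W_i\subset\bigcup_j W_i^j\), taken small enough that the components stay disjoint. The extension and shrinking use a retraction \(\rho_i\colon W_i\to Y_i\) together with the canonical short path from \(z\) to \(\rho_i(z)\) in each coordinate, which exists if \(W_i\) is taken inside a tubular or convex neighborhood. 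Then \(\TC_M(W_i)\le \TC_M(Y_i)\).

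Fix \(N>0\) and apply the lemma with these \(W_i\) and with threshold \(N\). This produces isolating blocks \(B_{N,i}\subset W_i\) and an open cover \(M\times M=A_N\cup U_{N,1}\cup\cdots\cup U_{N,r}\), with continuous time functions \(T_{N,i}\) satisfying \(\phi_{T_{N,i}(z)}(z)\in B_{N,i}\). The cover is assembled as follows.

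\begin{itemize}
\item On \(U_{N,i}\cap(\rho_i^{-1}\text{-pullback of }W_i^j)\), i.e.\ on \(F_i^j=\{z\in U_{N,i}\mid \phi_{T_{N,i}(z)}(z)\in W_i^j\}\), we build a planner. Write \(\phi_{s}(z)=(a_s,b_s)\). The path is \(t\mapsto a_t\) for \(0\le t\le T_{N,i}(z)\), followed by \(s_i^j\bigl(\phi_{T_{N,i}(z)}(z)\bigr)\), followed by the reverse of \(t\mapsto b_t\). It runs from \(x\) to \(y\) and depends continuously on \(z\). This gives \(\sum_i m_i\) open planner domains.
\item The movable set is \(F^{(N)}=A_N\). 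We need a homotopy that reaches \(\Theta\)-integral exactly \(-N\) while the partial integrals stay \(\le 0\le C\), since \(\Theta(V)=-|V|^2\le0\) along the flow. The natural choice is \(H_N(z,s)=\phi_{s\,\tau_N(z)}(z)\), where \(\tau_N(z)\) is the first time at which the integral equals \(-N\).
\end{itemize}

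The main obstacle is that \(\tau_N\) need not be continuous on \(A_N\). Along an orbit the integral is nonincreasing, but it may be constant on an interval (at a zero), and it then stalls. I would handle this by replacing \(A_N\) with the open set \(A'_N\) of points whose integral reaches below \(-N\) at some finite time. This set is open by continuity of the flow. On it we define \(\tau\) as the hitting time of the level \(-N-1\). The integral is strictly decreasing away from zeros, so we could use a continuous approximation instead, or simply take a smooth partition-of-unity average of hitting times chosen locally, which is valid because the integral decreases monotonically. Such an average still yields final integral \(\le -N\).

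Finally, the lemma's cover at threshold \(N+1\) must still be covered by \(A'_N\) together with the \(U_{N+1,i}\). Hence \(\TC(M,\xi)\le \sum_i \TC_M(Y_i)\).
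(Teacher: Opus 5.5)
Your proposal is correct and follows essentially the same route as the paper. You extend the planners from \(Y_i\) to a small retracting neighborhood \(W_i\), apply the no-homoclinic-chain lemma on \(M\times M\), and transfer the planners back along the flow by concatenating with the two coordinate trajectories; your explicit construction of the movable homotopy on \(A_N\) supplies detail the paper omits.

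The stalling you worry about cannot actually occur. A non-stationary orbit never reaches \(\operatorname{Zero}(\Theta)\) in finite time, so along it the integral is strictly decreasing. Hence \(A_N\) already coincides with your \(A'_N\), and the hitting time of \(-N\) is continuous by the implicit function theorem. This also makes your slip of using the hitting time of \(-N-1\) on \(A'_N\) moot, since not every point of \(A'_N\) reaches that level.
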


\begin{proof}
For each \(i\), put
\[
m_i=\TC_M(Y_i).
\]
Choose a relatively open covering
\[
Y_i=A_{i,1}\cup\cdots\cup A_{i,m_i}
\]
admitting local motion planners.

Since \(Y_i\) is a compact ENR, there exists an open neighborhood
\(W_i\) of \(Y_i\), contained in the neighborhood on which \(\Theta\)
is exact, together with a deformation retraction
\[
r_i\colon W_i\to Y_i.
\]
For \(1\le j\le m_i\), put
\[
O_{i,j}=r_i^{-1}(A_{i,j}).
\]
Then
\[
W_i=O_{i,1}\cup\cdots\cup O_{i,m_i}.
\]
The local motion planners on \(A_{i,j}\) transfer, along the deformation
retraction of \(W_i\) onto \(Y_i\), to local motion planners on
\(O_{i,j}\).

Let \(N>0\).
Applying Lemma~\ref{lem:no-homoclinic-truncated-stable} with the
prescribed neighborhoods \(W_i\), we obtain closed neighborhoods
\[
B_{N,i}\subset W_i
\]
and an open covering
\[
M\times M
=
A_N\cup V_{N,1}\cup\cdots\cup V_{N,r},
\]
where \(A_N\) is movable with integral threshold \(-N\).
Moreover, for each \(i\), the flow gives a homotopy
\[
K_i\colon V_{N,i}\times[0,1]\to M\times M
\]
from the inclusion of \(V_{N,i}\) to a map
\[
\rho_i\colon V_{N,i}\to B_{N,i}\subset W_i.
\]

For \(1\le j\le m_i\), set
\[
V_{N,i,j}=\rho_i^{-1}(O_{i,j}).
\]
These sets form an open cover of \(V_{N,i}\).
By the homotopy lifting property, the local motion planner on
\(O_{i,j}\) transfers along \(K_i\) to a local motion planner on
\(V_{N,i,j}\).

Consequently,
\[
M\times M
=
A_N\cup
\bigcup_{i=1}^r\bigcup_{j=1}^{m_i}V_{N,i,j}
\]
is a covering of the type appearing in the definition of
\(\TC(M,\xi)\).
Therefore
\[
\TC(M,\xi)\le \sum_{i=1}^r m_i
=
\sum_{i=1}^r\TC_M(Y_i).
\]
Since \(N>0\) was arbitrary, the proof is complete.
\end{proof}

\begin{example}[A circle bundle over \(\Sigma_2\)]
Let
\[
S^1\hookrightarrow E\xrightarrow{\pi}\Sigma_2
\]
be a smooth circle bundle over the closed oriented surface of genus
\(2\).
Let
\[
0\ne \xi_\Sigma\in H^1(\Sigma_2;\Z).
\]
By a result of Sch\"utz \cite[Proposition~4.1]{Sch01}, one can choose a
closed 1-form \(\alpha\) representing \(\xi_\Sigma\) such that
\[
\operatorname{Zero}(\alpha)=\{q\}
\]
and such that \(\alpha\) admits a negative gradient-like flow with no
homoclinic chains.
Put
\[
\omega=\pi^*\alpha.
\]
Then
\[
[\omega]=\pi^*\xi_\Sigma\in H^1(E;\R)
\]
and
\[
\operatorname{Zero}(\omega)=\pi^{-1}(q)\cong S^1.
\]

Put
\[
\Theta=p_2^*\omega-p_1^*\omega
\]
on \(E\times E\).
Then
\[
[\Theta]
=
p_2^*[\omega]-p_1^*[\omega],
\]
and
\[
\operatorname{Zero}(\Theta)
=
\pi^{-1}(q)\times\pi^{-1}(q).
\]
Thus the unique zero component of \(\Theta\) is
\[
C=\pi^{-1}(q)\times\pi^{-1}(q)\subset E\times E.
\]

Choose a connection on the circle bundle and a compatible bundle metric on \(E\). Then the negative gradient flow of \(\omega\) is the horizontal lift of that of \(\alpha\). With respect to the product metric on \(E\times E\), the negative gradient flow of \(\Theta\) is the product of the reversed lifted flow in the first factor and the lifted flow in the second factor. If this flow admitted a homoclinic chain, the projection of a nonconstant coordinate to \(\Sigma_2\) would give such a chain for the flow of \(\alpha\), which is impossible.

Moreover, \(\Theta\) is exact on a neighborhood of \(C\).
Indeed, if
\[
\left.\alpha\right|_D=d\ell
\]
on a neighborhood \(D\) of \(q\), then on
\[
\pi^{-1}(D)\times\pi^{-1}(D)
\]
one has
\[
\Theta
=
d\left(
p_2^*(\ell\circ\pi)-p_1^*(\ell\circ\pi)
\right).
\]

Therefore Theorem~\ref{thm:gradient-flow-estimate} gives
\[
\TC(E,[\omega])\le \TC_E(C).
\]

For every pair in \(C\), both endpoints lie in the fiber
\[
\pi^{-1}(q)\cong S^1.
\]
Thus a motion planner for the fiber gives a motion planner in \(E\),
and hence
\[
\TC_E(C)\le \TC(S^1).
\]
Since
\[
\TC(S^1)=2,
\]
we obtain
\[
\TC(E,[\omega])\le 2.
\]
\end{example}

\begin{remark}
If the circle bundle is trivial, so that
\[
E\cong \Sigma_2\times S^1,
\]
then the product inequality gives
\[
\begin{aligned}
\TC(E,[\omega])
&\le
\TC(\Sigma_2,[\alpha])+\TC(S^1,0)-1\\
&=
1+2-1\\
&=2.
\end{aligned}
\]
Thus, in the trivial case, the preceding example recovers the same
upper bound as the product inequality.

For a nontrivial circle bundle, however, \(E\) does not in general
decompose as \(\Sigma_2\times S^1\), and the product inequality cannot
be applied directly. The preceding argument nevertheless gives
\[
\TC(E,[\omega])\le 2.
\]
Hence the navigation-function-type estimate extends the product
estimate to nontrivial circle bundles.
\end{remark}